\newtheorem{Prop}{Proposition}[section]
\newtheorem{Thm}[Prop]{Theorem}
\newtheorem{Lem}[Prop]{Lemma}
\newtheorem{Ex}[Prop]{Example}
\theoremstyle{definition}
\newtheorem{Def}[Prop]{Definition}
\newcommand{\R}{{\mathbb R}}
\newcommand{\Z}{{\mathbb Z}}
\newcommand{\GL}{\mathrm{GL}}
\newcommand{\SO}{\mathrm{SO}}
\newcommand{\OO}{\mathrm{O}}
\newcommand{\inner}[2]{\langle #1 , #2 \rangle} 
\newcommand{\Span}[1]{\mathrm{Span} \{ #1 \}}
\newcommand{\Map}{\mathrm{Map}}
\newcommand{\id}{\mathrm{id}}
\newcommand{\Aut}{\mathrm{Aut}}
\newcommand{\Inn}{\mathrm{Inn}}
\newcommand{\Dis}{\mathrm{Dis}}
\newcommand{\rnum}[1]{\expandafter{\romannumeral #1}}
\title[Homogeneous quandles with abelian inner automorphism groups]{Homogeneous quandles with abelian inner automorphism groups and vertex-transitive graphs}
\dedicatory{Dedicated to Professor Bang-Yen Chen's 80th birthday} 
\author{Konomi Furuki} 
\address[K.~Furuki]{Department of Mathematics, Hiroshima University, 
Higashi-Hiroshima, Japan 739-8526} 
\author{Hiroshi Tamaru}
\address[H.~Tamaru]{Department of Mathematics, Osaka Metropolitan University, Osaka City, Japan 558-8585} 
\email{tamaru@omu.ac.jp}
\thanks{
This work was supported by JSPS KAKENHI Grant Numbers JP19K21831, JP22H01124. 
This work was partly supported by MEXT Promotion of Distinctive Joint Research Center Program JPMXP0723833165.
} 
\date{}
\subjclass[2020]{57K12, 53C35} 
\keywords{Quandles, symmetric spaces, inner automorphism groups, vertex-transitive graphs, real Grassmannian manifolds} 
\begin{document} 

\maketitle 

\begin{abstract} 
A quandle is an algebraic system originated in knot theory, 
and can be regarded as a generalization of symmetric spaces. 
The inner automorphism group of a quandle is defined as the group generated by the point symmetries 
(right multiplications). 
In this paper, 
starting from any simple graphs, 
we construct quandles whose inner automorphism groups are abelian. 
We also prove that the constructed quandle is homogeneous if and only if the graph is vertex-transitive. 
This shows that there is a wide family of quandles with abelian inner automorphism groups, 
even if we impose the homogeneity. 
The key examples of such quandles are realized as subquandles of oriented real Grassmannian manifolds. 
%A quandle is said to be flat if the group generated by the compositions of two symmetries (right multiplications) is abelian. 
%In this paper, we construct flat disconnected quandles from any simple graphs, and prove that the constructed quandle is homogeneous if and only if the graph is vertex-transitive. 
%This shows that there is a wide family of flat disconnected quandles, even if we impose the homogeneity. 
%The key examples of such quandles are realized as subquandles of oriented real Grassmannian manifolds. 
\end{abstract}

\section{Introduction}
\label{sec:1} 

In the theory of symmetric spaces, 
the contributions by Chen and Nagano are immeasurable. 
They introduced several important notions, 
such as antipodal subsets and two-numbers (\cite{CN}), among many others. 
These notions are known to be related closely with several important areas in mathematics (see \cite{Chen}). 
We have studied quandles from the viewpoint of symmetric spaces. 
In \cite{KNOT}, 
the notion of antipodal subsets is transferred to quandles, 
and a generalization called $s$-commutative subsets is given. 
This paper is a natural continuation. 
In particular, 
we construct many examples of finite quandles with nice properties, 
owing to the theory of symmetric spaces developed by Chen and Nagano. 

The notion of quandles is originated in knot theory (\cite{Joyce, Matveev}). 
Thereafter quandles have been studied very actively, 
and recently the notion of quandles is recognized as an important concept in many branches of mathematics. 
Among others, quandles can be regarded as a generalization of symmetric spaces. 
In fact, the definition of quandles can be formulated analogously to symmetric spaces as follows. 
Let $X$ be a set, and denote by $\Map(X,X)$ the set of all maps from $X$ to itself. 
Then a pair $(X,s)$ with $s: X \rightarrow \Map(X,X)$ is called a \textit{quandle} if 
\begin{itemize}
\item[(Q1)]
for any $ x \in X $, $ s_{x}(x) = x $; 
\item[(Q2)]
for any $ x \in X $, $ s_{x} $ is bijective; 
\item[(Q3)]
for any $ x,y \in X $, $ s_{x} \circ s_{y} = s_{s_{x}(y)} \circ s_{x}$.
\end{itemize}
Each $s_x$ is called the \textit{point symmetry} at $x$. 
Note that these three conditions correspond to the Reidemeister moves in knot theory. 
For more details and related results, 
we refer to \cite{Carter, St} (see also \cite{Kamada}) and references therein. 

For symmetric spaces, 
flat ones are the most basic examples, 
and play fundamental roles in the structure theory (as is seen in the theory of maximal tori). 
Here, the flatness means that the curvature tensor vanishes identically. 
Recall that a Riemannian symmetric space is flat if and only if 
the group $G^0(X,s)$ generated by the compositions of two symmetries $s_x \circ s_y$ is abelian (\cite{Loos}). 
Therefore, 
quandles with abelian or ``small'' transformation groups would play similar important roles 
in a possible structure theory of quandles. 
For a quandle $(X,s)$, 
several transformation groups have been studied, 
such as the automorphism group $\Aut(X,s)$ and the following subgroups: 
\begin{align} 
\label{eq:0818} 
\Dis(X,s) \subset G^0(X,s) \subset \Inn(X,s) \subset \Aut(X,s) . 
\end{align} 
Recall that $\Inn(X,s)$ denotes the group generated by all point symmetries $s_x$, 
called the inner automorphism group. 
The group of displacements $\Dis(X,s)$ is generated by 
the elements of the form $s_x \circ s_y^{-1}$. 
A quandle $(X,s)$ is said to be \textit{flat} if $G^0(X,s)$ is abelian (\cite{IT, Singh}), 
and \textit{medial} if $\Dis(X,s)$ is abelian 
(\cite{Hulpke-Stanovsky-Vojtechovsky, Jedlicka-Pilitowska-Stanovsky-ZamojskaDzienio}). 

To the contrary, 
quandles with ``large'' transformation groups have also been studied. 
A quandle $(X,s)$ is said to be \textit{homogeneous} (resp.~\textit{connected}) 
if $\Aut(X,s)$ (resp.~$\Inn(X,s)$) acts transitively on $X$. 
A quandle $(X,s)$ is said to be \textit{two-point homogeneous} if 
$\Inn(X,s)$ acts doubly-transitively on $X$ (\cite{K.T.W., Tamaru, Vendramin, Wada}). 
A similar notion with respect to $\Aut(X,s)$ has also been studied (\cite{Bonatto}). 

In this paper, 
we study quandles $(X,s)$ whose inner automorphism groups $\Inn(X,s)$ are abelian. 
These quandles are flat and medial by (\ref{eq:0818}), 
but still form a large class. 
The results of this paper yield that there is a wide family of quandles with abelian inner automorphism groups, 
even if we impose the homogeneity. 
In other words, 
there are many quandles such that $\Inn(X,s)$ are small but $\Aut(X,s)$ are large. 
This would be in contrast to the following result. 

\begin{Prop}[\cite{IT}] 
\label{prop:IT}
Every finite flat connected quandle must be a discrete torus, 
that is a direct product of dihedral quandles, 
with odd cardinality. 
\end{Prop}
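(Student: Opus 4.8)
The plan is to show that flatness and connectedness force $(X,s)$ to be an Alexander (affine) quandle over a finite abelian group, equipped with a very rigid structure map, and then to pin that map down by elementary linear algebra. Throughout write $G := G^0(X,s)$ and recall the conjugation formula $g \circ s_x \circ g^{-1} = s_{g(x)}$ for $g \in \Inn(X,s)$, which is immediate from (Q3). In particular all point symmetries are conjugate when the quandle is connected, and both $G$ and $\Dis(X,s)$ are normal in $\Inn(X,s)$ (conjugating $s_x\circ s_y$ by $s_z$ again gives a product of two symmetries).

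First I would analyze the quotient $\Inn(X,s)/G$. Since every product $s_x\circ s_y$ lies in $G$, in the quotient the relation $\overline{s_x}\,\overline{s_y}=1$ holds for all $x,y$; taking $x=y$ shows each $\overline{s_x}$ is an involution, and then $\overline{s_x}=\overline{s_y}$, so $\Inn(X,s)/G$ is cyclic of order $1$ or $2$. If the order is $1$, then $\Inn(X,s)=G$ is abelian and transitive, hence regular, so the element $s_{x_0}$, which fixes $x_0$, must be the identity; the quandle is then trivial and connected only as a single point. So assume $[\Inn(X,s):G]=2$.

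Next I would prove that $G$ itself acts transitively. As $G$ is normal, its orbits form an $\Inn(X,s)$-invariant partition and their number divides $[\Inn(X,s):G]=2$; but $s_{x_0}$ maps the $G$-orbit of $x_0$ to the $G$-orbit of $s_{x_0}(x_0)=x_0$, fixing that orbit, and a permutation of a two-element set fixing one point is trivial. Hence $\Inn(X,s)$ preserves every $G$-orbit, and transitivity forces a single orbit, so the abelian group $G$ acts regularly. Fixing $x_0$, I transport the group law of $G$ to $X$ via $g\mapsto g(x_0)$, giving an abelian group $(X,+)$ with $0=x_0$ on which $G$ acts by all translations. Setting $T:=s_{x_0}$, the conjugation formula shows $T\in\Aut(X,+)$, and $T^2=\id$ because $s_{x_0}^{2}\in G$ acts trivially by conjugation on the abelian $G$. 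Writing $s_b=g_b\circ T\circ g_b^{-1}$ with $g_b$ the translation by $b$ yields $s_b(c)=(\id-T)b+Tc$, i.e.\ $(X,s)=\mathrm{Aff}(X,T)$ with $T$ an involution. A short computation then gives that $s_b\circ s_d$ is translation by $(\id-T)(b-d)$, so $G$ equals the translations by the subgroup $(\id-T)X$; regularity of $G$ forces $(\id-T)X=X$, so $\id-T$ is invertible.

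Finally I would run the linear algebra. On the $2$-primary part of $X$, reducing modulo $2$ gives $(\id-T)^2=\id-2T+T^2=2(\id-T)=0$, so $\id-T$ is nilpotent there and cannot be invertible unless the $2$-part vanishes; hence $|X|$ is odd. Since $2$ is then invertible and $T^2=\id$, one has $X=\Fix(T)\oplus\ker(\id+T)$, on which $\id-T$ acts as $0$ and as $2$ respectively, so invertibility forces $\Fix(T)=0$ and $T=-\id$. Then $s_b(c)=2b-c$, which is exactly the dihedral quandle operation, and writing $X$ as $\bigoplus_i \Z/n_i$ with each $n_i$ odd realizes $(X,s)$ as the product $\prod_i \mathrm{Dih}(n_i)$ of odd dihedral quandles, a discrete torus of odd cardinality. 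I expect the crux to be the reduction establishing that the abelian group $G$ acts transitively --- equivalently, that no proper block decomposition survives --- since once the affine normal form $\mathrm{Aff}(X,T)$ with $\id-T$ invertible is in hand, the eigenvalue computation forcing odd order and $T=-\id$ is essentially bookkeeping.
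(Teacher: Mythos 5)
The paper does not actually prove Proposition~\ref{prop:IT}; it is quoted from \cite{IT} without proof, so there is no in-paper argument to compare against. Your proposal is a correct and essentially complete proof, and it follows the same basic strategy as the classification in \cite{IT}: reduce to the case where the abelian group $G^0(X,s)$ acts regularly, transport its group structure to $X$ to put the quandle in Alexander form $s_b(c)=(\id-T)b+Tc$ with $T$ an involutive automorphism and $\id-T$ invertible, and then force $|X|$ odd and $T=-\id$ by the eigenvalue/nilpotency computation. All the key steps check out: the quotient $\Inn(X,s)/G^0(X,s)$ is generated by the common image of the $s_x$ and has order at most $2$; normality of $G^0$ plus the fact that $s_{x_0}$ fixes its own block forces a single $G^0$-orbit; and $(\id-T)^2=2(\id-T)$ kills the $2$-primary part while the splitting $X=\ker(\id-T)\oplus\ker(\id+T)$ in odd order forces $T=-\id$. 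The only imprecision is your justification of $T^2=\id$: saying that $s_{x_0}^2\in G^0$ ``acts trivially by conjugation on the abelian $G^0$'' only shows $T^2$ centralizes $G^0$; the clean argument is that $s_{x_0}^2$ lies in $G^0$, fixes $x_0$, and $G^0$ acts regularly, so $s_{x_0}^2=\id$. This is immediate from facts you have already established, so it is a cosmetic rather than a substantive gap.
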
 

The condition in this proposition means that $G^0(X,s)$ is small but $\Inn(X,s)$ is large. 
It seems to be natural that this kind of ``unbalanced'' condition is restrictive, 
but this is not true for our case. 

The first result of this paper constructs examples of homogeneous quandles with abelian inner automorphism groups. 
The key idea of this construction comes from symmetric spaces. 
We consider the oriented real Grassmannian manifold $G_k(\R^n)^\sim$, 
consisting of all oriented $k$-dimensional linear subspaces in $\R^n$. 
This is naturally a symmetric space, and hence a quandle (see Section~\ref{sec3} for details). 
As subquandles in $G_k(\R^n)^\sim$, 
we can construct interesting examples of quandles as follows. 

\begin{Thm} 
Let $\{ e_1 , \ldots , e_n \}$ be the standard basis of $\R^n$, 
and $A(k,n)$ be the set of all oriented $k$-dimensional linear subspaces spanned by $\{ e_{i_1} , \ldots , e_{i_k} \}$. 
Then $A(k,n)$ is a subquandle of $G_k(\R^n)^\sim$. 
Furthermore, 
$A(k,n)$ is a homogeneous disconnected quandle, 
and its inner automorphism group is abelian. 
\end{Thm}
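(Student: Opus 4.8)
The plan is to work concretely with the orthogonal-reflection description of the symmetric-space quandle structure on $G_k(\R^n)^\sim$. For an oriented $k$-subspace $V$, the point symmetry $s_V$ is the geodesic reflection, which on underlying (unoriented) subspaces is $W \mapsto \sigma_V W$ with $\sigma_V = 2\pi_V - \mathrm{id}$ the orthogonal reflection fixing $V$ and negating $V^\perp$, and which on oriented subspaces additionally pushes forward orientations. First I would record that every $g \in \OO(n)$ acts as a quandle automorphism of $G_k(\R^n)^\sim$, since $\sigma_{gV} = g\,\sigma_V\, g^{-1}$ forces $g \circ s_V = s_{gV}\circ g$; this identity is the source of all the automorphisms used below.

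Next I would establish the subquandle property by a direct computation. For $V \in A(k,n)$ the coordinate subspace indexed by a $k$-subset $I \subseteq \{1,\dots,n\}$, the reflection $\sigma_V$ is the diagonal matrix sending $e_j \mapsto e_j$ for $j \in I$ and $e_j \mapsto -e_j$ for $j \notin I$. Applying this to the coordinate subspace indexed by $J$ fixes $J$ as an unoriented subspace and multiplies its orientation by $(-1)^{|J \setminus I|}$. Writing an element of $A(k,n)$ as a pair $(J,\epsilon)$ with $\epsilon \in \{\pm 1\}$ an orientation relative to a fixed reference, this yields the clean formula
\[
s_{I}(J,\epsilon) = \bigl(J,\ (-1)^{|J \setminus I|}\epsilon\bigr),
\]
which shows $s_V\bigl(A(k,n)\bigr) \subseteq A(k,n)$, and since each $s_V$ is an involution it restricts to a bijection; hence $A(k,n)$ is a subquandle. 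The formula also reveals that $s_V$ depends only on the underlying subset $I$, not on its orientation.

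From this formula the two transformation-group assertions are immediate. Each $s_I$ fixes the underlying subset and acts on the orientation fibre by a sign, so any two point symmetries commute—reading off $s_I s_{I'}(J,\epsilon) = (J, (-1)^{|J\setminus I| + |J\setminus I'|}\epsilon)$, which is symmetric in $I$ and $I'$—and hence $\Inn(A(k,n))$ is an abelian (elementary abelian $2$-) group. The same formula shows that $\Inn(A(k,n))$ preserves every underlying subset $J$; since $0 < k < n$ forces at least two distinct $k$-subsets, $\Inn(A(k,n))$ cannot act transitively, so $A(k,n)$ is disconnected.

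For homogeneity I would exhibit enough automorphisms coming from $\OO(n)$. The coordinate permutations $S_n \subset \OO(n)$ act transitively on the $k$-subsets $J$, while the diagonal sign-change matrices $(\Z/2)^n \subset \OO(n)$ fix every subset and act on the orientation over $J$ by $\prod_{j\in J}\epsilon_j$, so flipping a single sign $\epsilon_{j_0}$ with $j_0 \in J$ reverses that orientation. Given any two elements $(J,\epsilon)$ and $(J',\epsilon')$, I first move $J$ to $J'$ by a permutation and then correct the resulting orientation by a single sign change; this realizes transitivity of $\Aut(A(k,n))$ and proves homogeneity. The only genuinely delicate point is the orientation bookkeeping—verifying the sign $(-1)^{|J\setminus I|}$ and the sign $\prod_{j\in J}\epsilon_j$ consistently against a fixed choice of reference orientations—but once the pushforward-of-orientation conventions are fixed these reduce to routine determinant computations.
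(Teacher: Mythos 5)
Your proposal is correct and follows essentially the same route as the paper: both derive the sign formula $s_I(J) = \bigl(J, (-1)^{\#(J\setminus I)}\epsilon\bigr)$ from the diagonal form of the coordinate reflections, read off commutativity of the point symmetries and the failure of $\Inn$-transitivity from that formula, and obtain homogeneity from elements of $\OO(n)$ normalizing $A(k,n)$ (you use permutation matrices plus diagonal sign changes, the paper uses $\pi/2$-rotations in coordinate planes plus a single sign change --- the same mechanism). The only caveat is that your disconnectedness argument assumes $0<k<n$ so that there are at least two underlying $k$-subsets; the degenerate case $k=n$ (where $A(n,n)$ is a two-point trivial quandle, hence still disconnected) needs the paper's separate observation that a two-element quandle cannot be connected.
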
 

This example would be interesting also from the viewpoint of symmetric spaces 
(for recent studies on particular subsets in symmetric spaces, see \cite{KNOT} and references therein). 
On the other hand, 
one can observe that the structures of the above quandles $A(k,n)$ can be described in terms of graphs. 
The second result of this paper generalizes this construction, 
namely, we construct quandles from graphs. 
Let $G = (V(G), E(G))$ be a simple graph, 
where $V(G)$ denotes the set of vertices and $E(G)$ the set of edges. 
Let $\Z_2 := \Z / 2 \Z$, 
and we use the adjacent function $e : V(G) \times V(G) \to \Z_2$ defined by 
\begin{align} 
\label{eq:adjacent} 
e(v,w) := \left\{ 
\begin{array}{ll}
1 & (\mbox{if $v$ and $w$ are joined by an edge}) , \\ 
0 & (\mbox{otherwise}) . 
\end{array} 
\right. 
\end{align} 

\begin{Thm} 
\label{thm:1-3} 
Let $G = (V(G), E(G))$ be a simple graph, and put $X := V(G) \times \Z_2$. 
We define a map $s : X \to \Map(X,X)$ by 
\begin{align*} 
s_{(v , a)} (w , b) := (w , b + e(v,w) ) . 
\end{align*} 
Then $Q_G := (X,s)$ is a disconnected quandle, 
and its inner automorphism group is abelian. 
Furthermore, 
$Q_G$ is homogeneous if and only if the graph $G$ is vertex-transitive. 
\end{Thm}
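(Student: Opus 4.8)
The plan is to exploit the crucial observation that the point symmetry $s_{(v,a)}$ does not depend on the $\Z_2$-coordinate $a$: since $s_{(v,a)}(w,b) = (w, b+e(v,w))$, one may write $s_v := s_{(v,a)}$ and regard each point symmetry as an involution of $X$ determined by a single vertex. First I would verify the three quandle axioms. Axiom (Q1) follows because the graph is simple and hence has no loops, so $e(v,v)=0$ and $s_v(v,a)=(v,a)$. Axiom (Q2) holds because $s_v^2 = \id$, in view of $2e(v,w)=0$ in $\Z_2$. For (Q3) I would compute both sides on an arbitrary $(u,c)$ and check that each equals $(u,\, c + e(v,u)+e(w,u))$; equivalently, one notes that all point symmetries commute and that $s_{s_{(v,a)}(w,b)} = s_w$, because the symmetry ignores the second coordinate.

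The abelianness of $\Inn(Q_G)$ is then immediate: this group is generated by the involutions $s_v$, and the computation above shows $s_v \circ s_w = s_w \circ s_v$ for all $v,w$, so $\Inn(Q_G)$ is abelian (in fact an elementary abelian $2$-group, realised as the $\Z_2$-span of the rows of the adjacency matrix). Disconnectedness follows because every $s_v$ fixes the first coordinate, so each $\Inn(Q_G)$-orbit is contained in a fibre $\{v\}\times\Z_2$; as soon as $G$ has at least two vertices there are at least two orbits.

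For the equivalence, consider first the implication that vertex-transitivity implies homogeneity. Given $\phi\in\Aut(G)$ I would set $F_\phi(v,a):=(\phi(v),a)$ and check, using $e(\phi(v),\phi(w))=e(v,w)$, that $F_\phi\in\Aut(Q_G)$. Independently, for any function $g:V(G)\to\Z_2$ the fibre-flip $T_g(v,a):=(v,a+g(v))$ is a quandle automorphism (the intertwining relation reduces to a cancellation in $\Z_2$). Since vertex-transitivity lets me move any vertex to any other by some $\phi$, composing $F_\phi$ with a suitable $T_g$ carries an arbitrary $(v,a)$ to an arbitrary $(w,b)$, proving homogeneity.

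The converse, which I expect to be the main obstacle, requires showing that quandle automorphisms descend to graph automorphisms. The point is to recover the fibre structure intrinsically: the $\Inn(Q_G)$-orbit of $(v,a)$ is the whole fibre $\{v\}\times\Z_2$ when $v$ is non-isolated, and the singleton $\{(v,a)\}$ when $v$ is isolated. Because automorphisms permute $\Inn(Q_G)$-orbits while preserving their cardinalities, homogeneity forces all orbits to have the same size; hence either $G$ has no edges (in which case $\Aut(G)$ is the full symmetric group and $G$ is trivially vertex-transitive) or $G$ has no isolated vertex. In the latter case every $F\in\Aut(Q_G)$ permutes the fibres and so induces a bijection $\bar F:V(G)\to V(G)$ with $F(\{v\}\times\Z_2)=\{\bar F(v)\}\times\Z_2$. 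Writing $F(v,a)=(\bar F(v),\sigma_v(a))$ and substituting into $F\circ s_{(v,a)}=s_{F(v,a)}\circ F$, the fact that every bijection $\sigma$ of $\Z_2$ satisfies $\sigma(c+t)=\sigma(c)+t$ makes the second coordinates cancel, leaving $e(\bar F(v),\bar F(u))=e(v,u)$ for all $u,v$. Thus $\bar F\in\Aut(G)$, and transitivity of $\Aut(Q_G)$ on $X$ yields transitivity of $\Aut(G)$ on $V(G)$.
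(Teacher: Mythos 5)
Your proof is correct and follows essentially the same route as the paper: direct verification of the quandle axioms, the observation that $\Inn(Q_G)$-orbits lie in the fibres $\{v\}\times\Z_2$ (whole fibre for non-isolated $v$, singleton for isolated $v$), lifting graph automorphisms together with fibre flips for one direction of the equivalence, and descending quandle automorphisms to the vertex set for the other --- your remark that every bijection of $\Z_2$ is a translation neatly replaces the paper's step of normalizing $\varphi$ by composing with the single-vertex flips $r_u$. The only point to patch is disconnectedness when $G$ has exactly one vertex: there is then only one fibre, so ``at least two vertices gives at least two orbits'' does not apply, but $Q_G$ is the trivial two-point quandle whose two singleton orbits still yield disconnectedness.
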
 

This shows that there is a wide family of homogeneous quandles with abelian inner automorphism groups. 
Note that the above construction can be regarded as a special case of abelian extensions using quandle $2$-cocycles 
(see Section~\ref{sec4}). 
Finally, we note that the quandles $Q_G$ constructed from graphs $G$ are always involutive, that is, 
all point symmetries $s_x$ are involutive. 
There must exist non-involutive homogeneous quandles with abelian inner automorphism groups, 
which will be studied in the forthcoming paper. 

\section{Preliminaries}
\label{sec2}

In this section, 
we recall some necessary notions of quandles, 
and also describe fundamental examples. 
Two particular finite quandles, given as subsets in the spheres, are studied in detail. 

\subsection{Examples of quandles} 

As mentioned in Section~\ref{sec:1}, we denote a quandle by $(X,s)$, 
where the map $s : X \to \Map(X,X)$ is called a \textit{quandle structure}. 
In this subsection, we give some examples of quandles. 

\begin{Ex} 
Let $X$ be any nonempty set, and define $s$ by $s_{x} = \id_{X}$ for any $x \in X$. 
Then $(X,s)$ is a quandle, which is called a \textit{trivial quandle}. 
\end{Ex} 

Note that $\id_{X}$ denotes the identity map. 
We need examples of nontrivial quandles. 
First of all, we recall properties of reflections with respect to linear subspaces, 
which will be used to define some quandle structures. 
Denote by $\OO(n)$ the orthogonal group of $\R^n$. 

\begin{Lem} 
\label{lem:Grassmann} 
Let $V$ be a linear subspace in $\R^n$, 
and denote by $r_V$ the reflection on $\R^n$ with respect to $V$. 
Then one has 
\begin{enumerate} 
\item[$(1)$] 
$r_V \in \OO(n)$$;$ 
\item[$(2)$] 
for any $g \in \OO(n)$, it satisfies $g \circ r_V = r_{g(V)} \circ g$. 
\end{enumerate} 
\end{Lem}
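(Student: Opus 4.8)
The plan is to work throughout with the orthogonal decomposition $\R^n = V \oplus V^\perp$ that defines the reflection. Recall that $r_V$ fixes every vector of $V$ and negates every vector of $V^\perp$; equivalently, writing $P_V$ for the orthogonal projection onto $V$, one has $r_V = 2P_V - \id$. Both parts then reduce to straightforward computations against this decomposition.

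For part $(1)$, I would first note that $r_V$ is linear, being the combination $2P_V - \id$ of linear maps. To verify orthogonality, I take arbitrary $x,y \in \R^n$, decompose them as $x = v_1 + w_1$ and $y = v_2 + w_2$ with $v_i \in V$ and $w_i \in V^\perp$, and compute $\inner{r_V(x)}{r_V(y)} = \inner{v_1 - w_1}{v_2 - w_2}$. Expanding and using that the cross terms $\inner{v_i}{w_j}$ vanish by $V \perp V^\perp$, this equals $\inner{v_1}{v_2} + \inner{w_1}{w_2} = \inner{x}{y}$. Hence $r_V$ preserves the inner product, so $r_V \in \OO(n)$. Alternatively, one observes that $r_V$ is a symmetric involution, and any symmetric involution $A$ is orthogonal since $A^{\top} A = A^2 = \id$.

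For part $(2)$, the key observation is that an orthogonal map carries the defining decomposition of $V$ to that of $g(V)$. First I would establish the identity $g(V^\perp) = g(V)^\perp$: if $w \perp V$, then for every $v \in V$ one has $\inner{g(w)}{g(v)} = \inner{w}{v} = 0$, so $g(w) \perp g(V)$, giving $g(V^\perp) \subseteq g(V)^\perp$; since $g$ is invertible, both subspaces have dimension $n - \dim V$, so equality holds. With this in hand I take any $x = v + w$ as above and compute both sides. On one hand $g(r_V(x)) = g(v) - g(w)$. On the other hand $g(x) = g(v) + g(w)$ with $g(v) \in g(V)$ and $g(w) \in g(V)^\perp$, whence $r_{g(V)}(g(x)) = g(v) - g(w)$. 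The two agree for all $x$, proving $g \circ r_V = r_{g(V)} \circ g$.

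The computations are entirely routine and present no genuine obstacle. The only point requiring a moment's care is the identity $g(V^\perp) = g(V)^\perp$ used in part $(2)$: this is precisely where the orthogonality of $g$ enters, and the conclusion would fail for a general invertible $g$, so it deserves to be stated explicitly rather than taken for granted.
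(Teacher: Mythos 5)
Your proof is correct and follows essentially the same route as the paper: both parts are verified against the decomposition $\R^n = V \oplus V^\perp$, with $(2)$ proved by checking the two maps agree on this decomposition. The only difference is that you make explicit the identity $g(V^\perp) = g(V)^\perp$, which the paper's proof uses implicitly when it asserts the two maps coincide on $V^\perp$; spelling it out is a reasonable (and slightly more careful) choice.
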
 

\begin{proof} 
Let $\inner{}{}$ be the canonical inner product on $\R^n$, 
and denote by $V^\perp$ the orthogonal complement of $V$ in $\R^n$ with respect to $\inner{}{}$. 
Then, by definition, the reflection $r_V$ satisfies 
\begin{align} 
r_V (x + y) = x - y \quad ( x \in V , \ y \in V^\perp ) . 
\end{align} 
Therefore it is easy to show that $r_V$ preserves $\inner{}{}$, 
that is, $r_V \in \OO(n)$. 
This shows the first assertion. 
In order to prove the second assertion, take any $g \in \OO(n)$. 
Then one can see that 
$g \circ r_V$ and $r_{g(V)} \circ g$ coincide with each other on $V$, and also on $V^\perp$. 
Since both maps are linear, this completes the proof. 
\end{proof} 

By using the reflections, 
we can define a quandle structure on the sphere $S^{n-1}$, 
to be exact, the unit sphere in $\R^n$ centered at the origin. 
The following quandle $(S^{n-1},s)$ is called the \textit{sphere quandle} in this paper. 

\begin{Ex} 
Let $S^{n-1}$ be the unit sphere in $\R^n$. 
For each $x \in S^{n-1}$, 
let us define $s_x$ by the reflection with respect to $\mathrm{Span} \{ x \}$. 
Then this $s$ gives a quandle structure on $S^{n-1}$. 
\end{Ex} 

\begin{proof} 
For each $x \in S^{n-1}$, the quandle structure is defined as 
\begin{align} 
s_x : S^{n-1} \to S^{n-1} : y \mapsto r_{ \mathrm{Span} \{ x \} } (y) . 
\end{align} 
By definition, one can directly see that $s$ is well-defined, 
that is, $s_x$ preserves $S^{n-1}$. 
Conditions~(Q1) and (Q2) are easy to check. 
Condition~(Q3) follows from Lemma~\ref{lem:Grassmann}. 
\end{proof} 

In order to describe more examples of quandles, 
we use the following notion of subquandles. 

\begin{Def} 
Let $(X,s)$ be a quandle. 
A nonempty subset $A \subset X$ is called a \textit{subquandle} if 
$s_a^{\pm 1}(A) \subset A$ holds for every $a \in A$. 
\end{Def} 

It is easy to see that a subquandle is a quandle. 
Subquandles provide several interesting examples of quandles. 

\begin{Ex} 
Let $r \in \Z_{>0}$, and $(S^1, s)$ be the sphere quandle with $n=1$. 
Then the following $D_r$ is a subquandle$:$ 
\begin{align} 
D_r := \{ (\cos (2 k \pi/r) , \sin (2 k \pi/r)) \in S^1 \mid k \in \Z \} . 
\end{align} 
\end{Ex} 

This quandle $D_r$ is called the \textit{dihedral quandle} of order $r$, which is well-known. 
The following is also easy, but provides one of key examples. 

\begin{Ex} 
\label{ex:our_quandle}
Let $(S^{n-1}, s)$ be the sphere quandle, 
and $\{ e_1, \ldots, e_n \}$ be the standard basis of $\R^n$. 
Then the following $A^{n-1}$ is a subquandle$:$ 
\begin{align} 
A^{n-1} := \{ \pm e_1 , \ldots , \pm e_n \} . 
\end{align} 
\end{Ex} 

\begin{proof} 
Since $s_x$ is involutive, 
we have only to show that $s_a(A^{n-1}) \subset A^{n-1}$ for all $a \in A^{n-1}$. 
By the definition of $s$, one can easily see that 
\begin{align} 
\label{eq:A_relation} 
s_{e_i} = s_{- e_i} , \quad 
s_{e_i} (\pm e_i) = \pm e_i , \quad 
s_{e_i} (\pm e_j) = \mp e_j \ (\mbox{for $i \neq j$}) . 
\end{align} 
This completes the proof of this example. 
\end{proof} 

Note that $A^0 = \{ \pm e_1 \}$ is a trivial quandle, 
and $A^1 = \{ \pm e_1 , \pm e_2 \}$ coincides with the dihedral quandle $D_4$ of order $4$. 
Properties of the quandle $A^{n-1}$ will be mentioned in the following subsections. 

\subsection{Homogeneous quandles} 
\label{subsec:homogeneous-quandles}

In this subsection, we recall the definition of homogeneous quandles, 
and mention some examples. 

\begin{Def}\label{Def2.3}
Let $ (X,s^{X}) $ and $ (Y,s^{Y}) $ be quandles. 
Then $ f : X \rightarrow Y $ is called a \textit{homomorphism} if for any $ x \in X $, it satisfies 
\begin{align} 
f \circ s^{X}_{x} = s^{Y}_{f(x)} \circ f . 
\end{align} 
\end{Def} 

A bijective homomorphism is called an \textit{isomorphism}. 
An isomorphism from a quandle $ (X,s) $ onto $ (X,s) $ itself is called an \textit{automorphism} of $ (X,s) $. 

\begin{Def} 
Let $(X,s)$ be a quandle. 
The group of all automorphisms of $(X,s)$ is called the 
\textit{automorphism group} of $(X,s)$, and denoted by $\Aut(X,s)$. 
A quandle $(X,s)$ is said to be \textit{homogeneous} if $\Aut(X,s)$ acts transitively on $X$. 
\end{Def} 

For the automorphism group of a subquandle, one can easily see the following. 
The proof is an easy exercise. 

\begin{Lem} 
\label{lem:subquandle} 
Let $(X,s)$ be a quandle, and $A$ be a subquandle of $(X,s)$. 
Then the following $N(A)$ acts on $A$ as automorphisms$:$ 
\begin{align} 
N(A) := \{ f \in \Aut(X,s) \mid f(A) = A \} . 
\end{align} 
\end{Lem}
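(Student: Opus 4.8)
The plan is to verify two things: that $N(A)$ is a subgroup of $\Aut(X,s)$, and that restricting each of its elements to $A$ yields an automorphism of the subquandle $(A,s^A)$, where $s^A$ denotes the quandle structure inherited by $A$, namely $s^A_a := s_a|_A$ for $a \in A$. The hypothesis that $A$ is a subquandle guarantees $s_a(A) \subset A$ and $s_a^{-1}(A) \subset A$, so each $s^A_a$ is a well-defined bijection of $A$; thus $(A,s^A)$ is genuinely a quandle, and it makes sense to speak of its automorphisms.

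First I would check that $N(A)$ is a subgroup. It contains $\id_X$, it is closed under composition since $(f \circ g)(A) = f(g(A)) = f(A) = A$, and it is closed under inverses since $f(A) = A$ implies $f^{-1}(A) = A$ upon applying $f^{-1}$ to both sides. Next, for any $f \in N(A)$, the condition $f(A) = A$ together with the bijectivity of $f$ on $X$ shows that the restriction $f|_A : A \to A$ is a well-defined bijection.

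The core step is to confirm that $f|_A$ is a quandle homomorphism with respect to $s^A$. Since $f \in \Aut(X,s)$, it satisfies the intertwining relation $f \circ s_a = s_{f(a)} \circ f$ for every $a \in A$. Restricting this identity to $A$ and using that $f(a) \in A$ (so that $s_{f(a)}|_A = s^A_{f(a)}$), I obtain $f|_A \circ s^A_a = s^A_{f|_A(a)} \circ f|_A$, which is exactly the homomorphism condition of Definition~\ref{Def2.3}. As $f|_A$ is bijective, it is an automorphism of $(A,s^A)$. Finally, the assignment $f \mapsto f|_A$ sends $\id_X$ to $\id_A$ and respects composition, so $N(A)$ acts on $A$ by automorphisms.

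The only point requiring a little care is the membership $f(a) \in A$, which is what makes $s^A_{f(a)}$ meaningful and is supplied precisely by the defining condition $f(A) = A$ of $N(A)$. Beyond this bookkeeping the argument is entirely formal, so I do not expect a genuine obstacle: as noted, the statement is an easy exercise, and the verification reduces to carefully restricting the ambient quandle axioms and the automorphism intertwining relation to the subset $A$.
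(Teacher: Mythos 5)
Your proof is correct and is exactly the routine verification the paper has in mind when it says ``The proof is an easy exercise'' (the paper gives no written proof). You check the subgroup property, the bijectivity of the restriction, and the restricted intertwining relation $f|_A \circ s^A_a = s^A_{f(a)} \circ f|_A$, which is all that is needed.
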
 

We here describe examples of homogeneous quandles. 
In fact, the examples mentioned in the previous subsection are all homogeneous. 

\begin{Ex} 
\label{ex:homogeneous} 
The dihedral quandle $D_r$ and 
the quandle $A^{n-1} = \{ \pm e_1 , \ldots , \pm e_n \}$ are both homogeneous. 
\end{Ex} 

\begin{proof} 
One knows $\OO(n) \subset \Aut(S^{n-1},s)$ by Lemma~\ref{lem:Grassmann}, 
and hence we can use Lemma~\ref{lem:subquandle} for both cases. 
For the dihedral quandle $D_r$, 
one can see that $N(D_r)$ acts transitively on $D_r$, 
since it contains a rotation of angle $2 \pi / r$. 
Similarly one can also show that $N(A^{n-1})$ acts transitively on $A^{n-1}$, 
since it contains a rotation of angle $\pi / 2$ in the $e_i e_j$-plane, for any $i,j$. 
\end{proof} 

\subsection{Connected quandles}

In this subsection, 
we recall the notion of connected quandles and connected components. 
We also describe examples of connected and disconnected quandles. 

\begin{Def} 
Let $(X,s)$ be a quandle. 
The group generated by $\{ s_{x} \mid x \in X \}$ is called the 
\textit{inner automorphism group} of $(X,s)$, and denoted by $\Inn(X,s)$. 
A quandle 
$ (X,s) $ is said to be \textit{connected} if $ \Inn(X,s) $ acts transitively on $ X $. 
\end{Def}

It follows from (Q2) and (Q3) that $s_{x} \in \Aut(X,s)$ for all $x \in X$. 
Then one has $\Inn(X,s) \subset \Aut(X,s)$, which yields the following. 

\begin{Prop} 
If a quandle $(X,s)$ is connected, then it is homogeneous. 
\end{Prop}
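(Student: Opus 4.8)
The plan is to exploit the inclusion $\Inn(X,s) \subset \Aut(X,s)$ noted in the paragraph immediately preceding the statement, and to observe that transitivity of a subgroup forces transitivity of any larger group. Since connectedness asserts that $\Inn(X,s)$ acts transitively on $X$, and $\Inn(X,s)$ is a subgroup of $\Aut(X,s)$, the full automorphism group inherits the transitivity, which is precisely the definition of homogeneity.

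The only step that carries any content is confirming that each point symmetry genuinely belongs to $\Aut(X,s)$, so that the inclusion of transformation groups is justified. First I would check that every $s_x$ is a homomorphism in the sense of Definition~\ref{Def2.3}: the required identity $s_x \circ s_y = s_{s_x(y)} \circ s_x$ for all $y \in X$ is exactly axiom~(Q3). Combining this with the bijectivity of $s_x$ from (Q2), each $s_x$ is an isomorphism of $(X,s)$ onto itself, hence an automorphism. Because $\Inn(X,s)$ is the group generated by the $s_x$ and $\Aut(X,s)$ is closed under composition and inversion, this yields $\Inn(X,s) \subset \Aut(X,s)$.

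The final step is the group-theoretic observation itself. Given any $x, y \in X$, connectedness supplies some $\varphi \in \Inn(X,s)$ with $\varphi(x) = y$; viewing $\varphi$ as an element of the larger group $\Aut(X,s)$ via the inclusion above, the same $\varphi$ witnesses that $\Aut(X,s)$ carries $x$ to $y$. As $x$ and $y$ were arbitrary, $\Aut(X,s)$ acts transitively on $X$.

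Honestly, there is no serious obstacle here: the statement is an immediate consequence of the inclusion (\ref{eq:0818}) specialized to $\Inn(X,s) \subset \Aut(X,s)$. If anything, the delicate point is purely bookkeeping, namely making explicit that a point symmetry is an automorphism rather than merely a bijection, and for that one simply cites (Q2) and (Q3) as above. The proof can therefore be stated in one or two sentences once the inclusion is in hand.
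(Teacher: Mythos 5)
Your proof is correct and follows exactly the paper's argument: the paper likewise deduces $s_x \in \Aut(X,s)$ from (Q2) and (Q3), obtains the inclusion $\Inn(X,s) \subset \Aut(X,s)$, and concludes that transitivity of the subgroup passes to the larger group. Nothing is missing.
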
 

For a quandle $(X,s)$ and $x \in X$, the orbit $\Inn(X,s).x$ is called the 
\textit{connected component} containing $x$. 
It is easy to see that every connected component is a subquandle. 
Note that a quandle $(X,s)$ is connected if and only if any connected component coincides with $X$. 

\begin{Ex} 
\noindent 
\begin{enumerate} 
\item[$(1)$] 
The dihedral quandle $D_r$ is connected if and only if $r$ is odd$;$ 
\item[$(2)$] 
The quandle $A^{n-1} = \{ \pm e_1 , \ldots , \pm e_n \}$ is always disconnected. 
\end{enumerate} 
\end{Ex} 

\begin{proof} 
The assertion for $D_r$ is well-known. 
We have only to show the second assertion. 
If $n=1$, then $A^0 = \{ \pm e_1 \}$ is trivial and hence disconnected. 
If $n \geq 2$, 
then it is easy to see from (\ref{eq:A_relation}) that $\{ \pm e_i \}$ are connected components, 
and hence $A^{n-1}$ is disconnected. 
\end{proof} 

Finally in this subsection, 
we show that an automorphism maps a connected component onto a connected component. 

\begin{Prop} 
\label{prop:2-14} 
Let $\varphi \in \Aut(X,s)$, and take $x \in X$. 
Then $\varphi$ maps the connected component containing $x$ onto the connected component containing $\varphi(x)$. 
\end{Prop}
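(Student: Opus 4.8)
The plan is to identify the connected component containing $x$ with the orbit $\Inn(X,s).x$, exactly as recalled just before the proposition, and then to show that $\varphi$ intertwines the $\Inn(X,s)$-action in the appropriate way. The only structural fact needed is that an automorphism conjugates point symmetries to point symmetries.

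First I would record the conjugation formula. Since $\varphi \in \Aut(X,s)$, the homomorphism condition of Definition~\ref{Def2.3} gives $\varphi \circ s_y = s_{\varphi(y)} \circ \varphi$ for every $y \in X$, hence
\[
\varphi \circ s_y \circ \varphi^{-1} = s_{\varphi(y)} .
\]
Conjugation by $\varphi$ is a group homomorphism on the ambient transformation group, and the displayed identity shows that it carries each generator $s_y$ of $\Inn(X,s)$ to the generator $s_{\varphi(y)}$. Since a group homomorphism is multiplicative and respects inverses, it follows that conjugation by $\varphi$ maps the subgroup $\Inn(X,s)$ onto itself; that is, $\varphi \circ g \circ \varphi^{-1} \in \Inn(X,s)$ for every $g \in \Inn(X,s)$, and likewise $\varphi^{-1} \circ h \circ \varphi \in \Inn(X,s)$ for every $h \in \Inn(X,s)$.

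Next I would compute the image of the orbit. For $g \in \Inn(X,s)$ I write $\varphi \circ g = (\varphi \circ g \circ \varphi^{-1}) \circ \varphi$ and apply this to $x$; setting $g' := \varphi \circ g \circ \varphi^{-1} \in \Inn(X,s)$ yields $\varphi(g(x)) = g'(\varphi(x)) \in \Inn(X,s).\varphi(x)$, which proves $\varphi(\Inn(X,s).x) \subset \Inn(X,s).\varphi(x)$. For the reverse inclusion, given $h \in \Inn(X,s)$ I put $g := \varphi^{-1} \circ h \circ \varphi \in \Inn(X,s)$, so that $\varphi \circ g = h \circ \varphi$ and hence $\varphi(g(x)) = h(\varphi(x))$; thus every point of $\Inn(X,s).\varphi(x)$ lies in $\varphi(\Inn(X,s).x)$. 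Combining the two inclusions gives $\varphi(\Inn(X,s).x) = \Inn(X,s).\varphi(x)$, which is precisely the assertion.

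There is no serious obstacle here: the entire content is the conjugation identity $\varphi \circ s_y \circ \varphi^{-1} = s_{\varphi(y)}$, which is immediate from the definition of an automorphism. The only point deserving a word of care is the passage from ``conjugation permutes the generating set $\{ s_y \mid y \in X \}$'' to ``conjugation preserves the generated subgroup $\Inn(X,s)$'', and this holds simply because conjugation is a group homomorphism.
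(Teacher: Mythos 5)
Your proof is correct and follows essentially the same route as the paper: both rest on the conjugation identity $\varphi \circ s_y \circ \varphi^{-1} = s_{\varphi(y)}$, which shows $\Inn(X,s)$ is normal in $\Aut(X,s)$, and then compute $\varphi(\Inn(X,s).x) = \Inn(X,s).\varphi(x)$. You merely spell out the two inclusions that the paper compresses into a single line.
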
 

\begin{proof} 
First of all, 
we recall that the inner automorphism group $\Inn(X,s)$ is a normal subgroup of $\Aut(X,s)$. 
This follows from 
\begin{align} 
f \circ s_y \circ f^{-1} = s_{f(y)} 
\end{align} 
for any $f \in \Aut(X,s)$ and $y \in X$. 
Note that the connected component containing $x$ is $\Inn(X,s).x$. 
We thus have 
\begin{align} 
\varphi (\Inn(X,s).x) 
= \varphi \Inn(X,s) \varphi^{-1}.(\varphi(x)) 
= \Inn(X,s).(\varphi(x)) , 
\end{align} 
which is the connected component containing $\varphi(x)$. 
\end{proof} 

\subsection{Commutativity of transformation groups} 

In this subsection, 
We study the commutativity of $G^0(X,s)$ and $\Inn(X,s)$ for the quandles described above. 
Recall that the definition of flat quandles is given in \cite{IT}. 

\begin{Def} 
A quandle $(X,s)$ is said to be \textit{flat} if 
the group $G^0(X,s)$ generated by $\{ s_{x} \circ s_y \mid x, y \in X \}$ is abelian. 
\end{Def}

We are interested in flat quandles, 
and also in quandles with $\Inn(X,s)$ being abelian. 
Recall that flat connected quandles have been classified in \cite{IT, Singh}. 

\begin{Ex} 
\label{ex:flat}
\noindent 
\begin{enumerate} 
\item[$(1)$] 
The dihedral quandle $D_r$ is always flat, 
and it has an abelian inner automorphism group if and only if $r \in \{ 1,2,4 \}$$;$ 
\item[$(2)$] 
The quandle $A^{n-1} = \{ \pm e_1 , \ldots , \pm e_n \}$ has an abelian inner automorphism group, 
for every $n \in \Z_{\geq 1}$. 
\end{enumerate} 
\end{Ex} 

\begin{proof} 
We show (1). 
For the dihedral quandle $D_r$, 
recall that $s_x$ is defined by the reflection $r_{ \Span{x} }$. 
Note that every reflection is an element in $\OO(2)$ with determinant $-1$. 
This yields that $G^0(D_r,s)$ is a subgroup of $\SO(2)$, 
and hence it is abelian, 
which shows that $D_r$ is flat. 
The assertion on $\Inn(D_r, s)$ follows from the following fact: 
for $x,y \in S^1$, 
two reflections $r_{\mathrm{Span} \{ x \}}$ and $r_{\mathrm{Span} \{ y \}}$ commute each other 
if and only if $x$ and $y$ are orthogonal or parallel. 
In particular, 
the inner automorphism group is isomorphic to $\Z_2 \times \Z_2$ if $n=4$. 

We show (2). 
For the quandle $A^{n-1}$, 
recall that $s_{\pm e_i}$ is defined by the reflection $r_{ \Span{e_i} }$, 
and this reflection can be represented as a diagonal matrices with respect to the basis 
$\{ e_1 , \ldots , e_n \}$. 
This yields that $r_{ \Span{e_i} }$ and $r_{ \Span{e_j} }$ commute, 
and hence $s_{e_i}$ and $s_{e_j}$ commute. 
\end{proof} 

We conclude that the quandle 
$A^{n-1} = \{ \pm e_1 , \ldots , \pm e_n \}$ 
is homogeneous, disconnected, and having abelian inner automorphism group. 
In fact, $A^{n-1}$ are simplest examples of the quandles constructed in this paper. 

\section{Motivating examples} 
\label{sec3} 

The quandle 
$A^{n} := \{ \pm e_1 , \ldots , \pm e_{n+1} \}$ 
defined in the previous section is homogeneous, disconnected, 
and having an abelian inner automorphism group. 
Recall that $A^n$ is defined as a subquandle of the sphere quandle $S^n$. 
In this section, we introduce further examples of quandles with the same properties. 
They are given by certain subquandles of the oriented real Grassmannians $G_k(\R^n)^\sim$ 
(also called the oriented real Grassmannian manifolds). 

\subsection{Real Grassmannians} 

In this subsection, 
we briefly recall (nonoriented) real Grassmannians and their quandle structures. 
As a set, real Grassmannians are defined by 
\begin{align} 
G_k(\R^n) := \{ V \mid \mbox{a linear subspace in $\R^n$ with $\dim V = k$} \} . 
\end{align} 
Recall that $r_V$ denotes the reflection with respect to a linear subspace $V$ in $\R^n$ 
(see Lemma~\ref{lem:Grassmann}). 
The quandle structure on $G_k(\R^n)$ is induced from the reflections. 

\begin{Prop} 
For each $V \in G_k(\R^n)$, 
denote by $s_V : G_k(\R^n) \to G_k(\R^n)$ the map induced from $r_V$. 
Then this $s$ gives a quandle structure on $G_k(\R^n)$. 
\end{Prop}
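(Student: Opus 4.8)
The plan is to verify the three quandle axioms directly, mirroring the argument used for the sphere quandle and relying on Lemma~\ref{lem:Grassmann}. First I would record that $s$ is well defined: by Lemma~\ref{lem:Grassmann}$(1)$, each reflection $r_V$ lies in $\OO(n)$, hence is a linear automorphism of $\R^n$, and therefore carries every $k$-dimensional subspace to a $k$-dimensional subspace. Thus $s_V : W \mapsto r_V(W)$ is a well-defined self-map of $G_k(\R^n)$.

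For (Q1), the reflection $r_V$ fixes $V$ pointwise by definition, so $r_V(V) = V$, that is, $s_V(V) = V$. For (Q2), every reflection is an involution, $r_V \circ r_V = \id$, so $r_V$ is a bijection of $\R^n$ with $r_V^{-1} = r_V$; consequently the induced map $s_V$ is a bijection of $G_k(\R^n)$, indeed with $s_V^{-1} = s_V$. Both checks are immediate and parallel to the sphere quandle case.

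The only step with any content is (Q3), and here Lemma~\ref{lem:Grassmann}$(2)$ does the work. Applying that lemma with $g = r_V \in \OO(n)$ to the subspace $W$ yields the identity
\begin{align*}
r_V \circ r_W = r_{r_V(W)} \circ r_V
\end{align*}
as elements of $\OO(n)$. Since $s_V(W) = r_V(W)$ by definition, passing to the induced maps on $G_k(\R^n)$ turns this into $s_V \circ s_W = s_{s_V(W)} \circ s_V$, which is precisely (Q3). The only point requiring a moment's care is that an equality of orthogonal transformations of $\R^n$ descends to an equality of the induced maps on $G_k(\R^n)$; this is immediate because the assignment $g \mapsto (W \mapsto g(W))$ is a homomorphism from $\OO(n)$ into the transformation group of $G_k(\R^n)$. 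I therefore expect no genuine obstacle: the entire nontrivial content has already been isolated in Lemma~\ref{lem:Grassmann}$(2)$, and the proof reduces to transcribing that relation through the induced-action homomorphism.
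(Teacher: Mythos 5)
Your proof is correct and follows the same route as the paper: well-definedness from $r_V \in \OO(n)$, (Q1) and (Q2) from the basic properties of reflections, and (Q3) by applying Lemma~\ref{lem:Grassmann}(2) with $g = r_V$ and descending to the induced maps. You have simply written out the details that the paper leaves as "easy to check."
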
 

\begin{proof} 
Note that the induced map $s_V$ is given as 
\begin{align} 
s_V (W) := \{ r_V (w) \in \R^n \mid w \in W \} . 
\end{align} 
It is easy to see $s_V (W) \in G_k(\R^n)$, that is, $s$ is well-defined. 
By the definition of $s$, Conditions~(Q1) and (Q2) are easy to check. 
Condition~(Q3) follows from Lemma~\ref{lem:Grassmann}. 
\end{proof} 

It also follows directly from Lemma~\ref{lem:Grassmann}~(2) that 
$\OO(n)$ acts on $G_k(\R^n)$ as automorphisms. 

\subsection{Oriented real Grassmannians} 

In this subsection, 
we recall the definition and some basic facts on the oriented real Grassmannians $G_k(\R^n)^\sim$. 
First of all, 
let us recall an orientation of a vector space. 

\begin{Def} 
Let $V$ be a $k$-dimensional real vector space, 
and let $( v_1, \ldots, v_k )$ and $( w_1, \ldots, w_k )$ be ordered bases of $V$. 
Then they are said to have the 
\textit{same orientation} if there exists $g \in \GL(k,\R)$ with $\det(g) > 0$ such that 
\begin{align} 
( v_1, \ldots, v_k ) = ( w_1, \ldots, w_k ) g . 
\end{align} 
\end{Def} 

Note that the matrix $g$ is so-called the change-of-basis matrix. 
It is clear that the above ``same orientation'' gives an equivalence relation. 

\begin{Def} 
Each element of the following quotient space $\mathrm{OR}(V)$ is called an \textit{orientation} of $V$$:$ 
\begin{align} 
\mathrm{OR}(V) := \{ \mbox{ordered bases of $V$} \} / \mbox{``same orientation''} . 
\end{align} 
\end{Def} 

One can easily see that $\mathrm{OR}(V)$ consists of two elements, 
that is, there exist exactly two orientations of $V$. 
An orientation is denoted as $\sigma = [ (v_1 , \ldots , v_k) ]$, 
where $(v_1 , \ldots , v_k)$ is an ordered basis. 

\begin{Def} 
The set of all oriented $k$-dimensional linear subspaces in $\R^n$ 
is called the \textit{oriented real Grassmannian}, and denoted by 
\begin{align} 
G_k(\R^n)^\sim := \{ (V, \sigma) \mid V \in G_k(\R^n) , \ \sigma \in \mathrm{OR}(V) \} . 
\end{align} 
\end{Def} 

As in the nonoriented case, 
the quandle structure is induced from the reflections $r_V$. 
First of all, we show that $\OO(n)$ acts on $G_k(\R^n)^\sim$. 

\begin{Lem} 
\label{lem:action_oriented} 
The orthogonal group $\OO(n)$ acts on $G_k(\R^n)^\sim$, 
where the action of $g \in \OO(n)$ on $(W , \tau) \in G_k(\R^n)^\sim$ with $\tau = [(w_1, \ldots, w_k)]$ 
is defined by 
\begin{align} 
g . (W, \tau) := ( g(W) , [ (g(w_1) , \ldots , g(w_k)) ]) . 
\end{align} 
\end{Lem}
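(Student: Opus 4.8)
The plan is to verify that the stated formula defines a genuine group action of $\OO(n)$, which amounts to three checks: that the right-hand side indeed lands in $G_k(\R^n)^\sim$, that it does not depend on the ordered basis chosen to represent the orientation $\tau$, and that the two action axioms hold. The only property of $g$ that the argument will use is that each $g \in \OO(n)$ is a linear isomorphism of $\R^n$; orthogonality plays no role beyond guaranteeing invertibility.

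First I would confirm that $g.(W,\tau)$ is a legitimate element of $G_k(\R^n)^\sim$. Since $g$ is a linear isomorphism, $g(W)$ is again a $k$-dimensional subspace, and $g$ carries the ordered basis $(w_1,\ldots,w_k)$ of $W$ to an ordered basis $(g(w_1),\ldots,g(w_k))$ of $g(W)$; hence $[(g(w_1),\ldots,g(w_k))]$ is a well-defined orientation of $g(W)$.

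The substantive step is independence of the chosen representative, and this is where I expect essentially all of the content to lie. Suppose $(w_1,\ldots,w_k)$ and $(w_1',\ldots,w_k')$ represent the same orientation $\tau$, so that by definition there is $h \in \GL(k,\R)$ with $\det(h) > 0$ and $(w_1,\ldots,w_k) = (w_1',\ldots,w_k') h$. Applying $g$ and using linearity, the same change-of-basis matrix $h$ relates the images:
\[ (g(w_1),\ldots,g(w_k)) = (g(w_1'),\ldots,g(w_k')) h . \]
Because $\det(h) > 0$ is unaffected, the two images define the same orientation of $g(W)$, so the assignment descends to orientation classes and the action is well-defined.

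Finally I would check the action axioms. The identity axiom $\id.(W,\tau) = (W,\tau)$ is immediate from the formula. For compatibility, given $g_1, g_2 \in \OO(n)$, the definition of composition of linear maps gives $(g_1 g_2)(w_i) = g_1(g_2(w_i))$ for each $i$, whence the subspace component satisfies $(g_1 g_2)(W) = g_1(g_2(W))$ and the orientation components agree as well, yielding $(g_1 g_2).(W,\tau) = g_1.(g_2.(W,\tau))$. The main (and essentially only) obstacle is the well-definedness on orientation classes above; the key point is that right-multiplication by $h$ commutes with applying the linear map $g$, so the positivity of $\det(h)$ transfers intact to the image bases.
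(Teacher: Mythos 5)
Your proof is correct and follows essentially the same route as the paper's: check that the image lies in $G_k(\R^n)^\sim$, verify well-definedness on orientation classes, and confirm the two group-action axioms. You merely spell out the well-definedness step more explicitly (exhibiting that the same change-of-basis matrix $h$ with $\det(h)>0$ relates the image bases), which the paper asserts without computation.
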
 

\begin{proof} 
One knows $g(W) \in G_k(\R^n)$. 
Furthermore, if 
$(w_1 , \ldots , w_k)$ and $(w'_1 , \ldots , w'_k)$ have the same orientation, 
then so do $(g(w_1) , \ldots , g(w_k))$ and $(g(w'_1) , \ldots , g(w'_k))$. 
This shows that the action is well-defined. 
One also needs to show the conditions of the group actions, 
that is, 
\begin{align} 
e.(W, \tau) = (W, \tau) , \quad h.(g.(W, \tau)) = (hg).(W, \tau) , 
\end{align} 
where $e$ is the identify, and for any $g,h \in \OO(n)$. 
Both of them follow directly from the definition. 
\end{proof} 

Recall that the reflection satisfies $r_V \in \OO(n)$. 
Therefore, by this lemma, $r_V$ induces a map from $G_k(\R^n)^\sim$ onto itself. 
This gives a quandle structure. 

\begin{Prop} 
\label{prop:oriented} 
For $(V , \sigma) \in G_k(\R^n)^\sim$, 
let us denote by $s_{(V, \sigma)}$ the action on $G_k(\R^n)^\sim$ induced from the reflection $r_V$. 
Then this $s$ gives a quandle structure on $G_k(\R^n)^\sim$. 
Furthermore, $\OO(n)$ acts on $G_k(\R^n)^\sim$ as automorphisms. 
\end{Prop}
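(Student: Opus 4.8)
The plan is to verify the three quandle axioms directly, using the fact that each $s_{(V,\sigma)}$ is simply the action of the orthogonal transformation $r_V$ via Lemma~\ref{lem:action_oriented}, together with the two properties of reflections recorded in Lemma~\ref{lem:Grassmann}. The first observation I would make is that $s_{(V,\sigma)}$ depends only on $V$ and not on the chosen orientation $\sigma$, since it is induced from $r_V$; this lets me regard every $s_{(V,\sigma)}$ as a group element of $\OO(n)$ acting on $G_k(\R^n)^\sim$ and reduce all the axiom-checking to associativity of that action.

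For (Q1) I would note that $r_V$ fixes $V$ pointwise. Hence if $\sigma = [(v_1, \ldots, v_k)]$ with $v_i \in V$, then $r_V(v_i) = v_i$ for each $i$, so both the subspace and its orientation are preserved, giving $s_{(V,\sigma)}(V, \sigma) = (V, \sigma)$. For (Q2) I would use that $r_V \in \OO(n)$ by Lemma~\ref{lem:Grassmann}~(1) and that $r_V$ is an involution; since the $\OO(n)$-action of Lemma~\ref{lem:action_oriented} is a genuine group action, $s_{(V,\sigma)}$ is a bijection, in fact its own inverse.

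The heart of the argument is (Q3), and here I would exploit the conjugation relation $g \circ r_V = r_{g(V)} \circ g$ from Lemma~\ref{lem:Grassmann}~(2). Writing the actions of $r_V$ and $r_W$ as group elements and using associativity, the left-hand side $s_{(V,\sigma)} \circ s_{(W,\tau)}$ becomes the action of $r_V r_W$. On the right-hand side, the base point $s_{(V,\sigma)}(W,\tau)$ has underlying subspace $r_V(W)$, so $s_{s_{(V,\sigma)}(W,\tau)}$ is the action of $r_{r_V(W)}$; applying Lemma~\ref{lem:Grassmann}~(2) with $g = r_V$ gives $r_{r_V(W)} = r_V r_W r_V^{-1}$, and composing once more with the action of $r_V$ yields again the action of $r_V r_W$. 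Both sides therefore coincide as transformations of $G_k(\R^n)^\sim$.

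Finally, for the assertion that $\OO(n)$ acts by automorphisms, I would check the homomorphism identity $g \circ s_{(V,\sigma)} = s_{g.(V,\sigma)} \circ g$ for each $g \in \OO(n)$. Since the underlying subspace of $g.(V,\sigma)$ is $g(V)$, the right-hand side is the action of $r_{g(V)} g$ while the left-hand side is the action of $g r_V$; these agree once more by Lemma~\ref{lem:Grassmann}~(2), and bijectivity of each $g$ is immediate from the group-action structure. I do not expect a genuine obstacle here: all the geometric content is already packaged into Lemma~\ref{lem:Grassmann}, and the only feature distinguishing this from the nonoriented Proposition is the orientation bookkeeping, which Lemma~\ref{lem:action_oriented} has already settled. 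The one point I would take care to state explicitly is the orientation-independence of $s_{(V,\sigma)}$, since it is what makes the group-theoretic shortcut legitimate.
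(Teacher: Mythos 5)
Your proof is correct and follows essentially the same route as the paper, which simply declares (Q1) and (Q2) obvious and cites Lemma~\ref{lem:Grassmann} for both (Q3) and the $\OO(n)$-automorphism claim; you have merely written out the details (orientation-independence of $s_{(V,\sigma)}$, the conjugation identity $g \circ r_V = r_{g(V)} \circ g$) that the paper leaves implicit.
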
 

\begin{proof} 
Conditions~(Q1) and (Q2) are obvious. 
It follows from Lemma~\ref{lem:Grassmann} that $s$ satisfies (Q3), 
and also $\OO(n)$ acts on $G_k(\R^n)^\sim$ as automorphisms. 
\end{proof} 

It is well-known that $G_k(\R^n)^\sim$ is a Riemannian symmetric space, 
whose geodesic symmetry at $(V, \sigma)$ is nothing but the above $s_{(V, \sigma)}$. 

\subsection{Some subquandles in the oriented real Grassmannians} 
\label{subsection:some_subquandles} 

In this subsection, 
we introduce some subquandles in $G_k(\R^n)^\sim$, 
and prove that they are homogeneous, disconnected, and having abelian inner automorphism groups. 
For simplicity of the notation, let us put 
\begin{align} 
(i_1 , \ldots , i_k) := \left( \mathrm{Span} \{ e_{i_1} , \ldots e_{i_k} \} , 
[ (e_{i_1} , \ldots e_{i_k}) ] \right) \in G_k(\R^n)^\sim . 
\end{align} 
We also denote by $- (i_1 , \ldots , i_k)$ the same linear subspace, 
but equipped with the opposite orientation. 

\begin{Prop} 
The following $A(k,n)$ is a subquandle of $G_k(\R^n)^\sim$$:$ 
\begin{align} 
A(k,n) := \{ \pm ( {i_1} , \ldots , {i_k} ) \in G_k(\R^n)^\sim \mid 1 \leq i_1 < \cdots < i_k \leq n \} . 
\end{align} 
\end{Prop}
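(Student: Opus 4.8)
The plan is to verify the subquandle condition $s_a^{\pm 1}(A(k,n)) \subset A(k,n)$ directly from the coordinate description, using that every standard basis vector is an eigenvector of each reflection $r_V$ when $V$ is itself a coordinate plane. First I would reduce the two inclusions to one: since $r_V^2 = \id$ on $\R^n$, the induced map $s_{(V,\sigma)}$ is an involution of $G_k(\R^n)^\sim$, so $s_a^{-1} = s_a$ and it suffices to show $s_a(A(k,n)) \subset A(k,n)$. I would also note at the outset that the orientation carried by $a$ is irrelevant here, because $s_{(V,\sigma)}$ is induced from $r_V$, which depends only on the underlying subspace $V$; thus the two signs $\pm(i_1,\ldots,i_k)$ give the same point symmetry and need not be treated separately.

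For the core computation, write $a = \pm(i_1,\ldots,i_k)$ with underlying subspace $V = \Span{e_{i_1},\ldots,e_{i_k}}$, so that $V^\perp = \Span{e_j \mid j \notin \{i_1,\ldots,i_k\}}$ and hence $r_V(e_j) = e_j$ for $j \in \{i_1,\ldots,i_k\}$ and $r_V(e_j) = -e_j$ otherwise. Now take an arbitrary $b = \pm(j_1,\ldots,j_k) \in A(k,n)$ with subspace $W = \Span{e_{j_1},\ldots,e_{j_k}}$ and orientation $[(e_{j_1},\ldots,e_{j_k})]$. By Lemma~\ref{lem:action_oriented}, the image $s_a(b)$ has underlying subspace $r_V(W)$, spanned by $r_V(e_{j_1}),\ldots,r_V(e_{j_k})$; since each $r_V(e_{j_l}) = \pm e_{j_l}$, we get $r_V(W) = W$, again a standard coordinate $k$-plane. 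For the orientation, the ordered basis $(r_V(e_{j_1}),\ldots,r_V(e_{j_k}))$ differs from $(e_{j_1},\ldots,e_{j_k})$ by the diagonal change-of-basis matrix $\mathrm{diag}(\epsilon_1,\ldots,\epsilon_k)$ with $\epsilon_l \in \{\pm 1\}$, whose determinant equals $(-1)^m$, where $m := \#\{ l \mid j_l \notin \{i_1,\ldots,i_k\} \}$. Therefore $s_a(b) = b$ when $m$ is even and $s_a(b) = -b$ when $m$ is odd, and in either case $s_a(b) \in A(k,n)$, which gives the desired inclusion.

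The only delicate point is the orientation bookkeeping, and I expect that to be the main obstacle to stating the argument cleanly: one must track that reflecting the spanning basis vectors preserves the subspace but may flip its orientation, and that the sign is governed precisely by the determinant of the diagonal matrix above. Once this change-of-basis matrix is isolated, everything reduces to the parity of $m$, and the verification is routine. I would therefore structure the write-up so that the eigenvalue action $r_V(e_j) = \pm e_j$ is recorded first, and the orientation sign is then read off as a single determinant computation.
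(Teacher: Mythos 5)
Your proposal is correct and follows essentially the same route as the paper: reduce to one inclusion via involutivity of $r_V$, record the eigenvalue action $r_V(e_j) = \pm e_j$, and conclude that $s_a(b) = \pm b$ according to the parity of $\#(\{j_1,\ldots,j_k\}\setminus\{i_1,\ldots,i_k\})$. The only difference is that you spell out the determinant-of-the-diagonal-change-of-basis justification for the orientation sign, which the paper asserts without comment.
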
 

\begin{proof} 
Take any $I := \pm ( {i_1} , \ldots , {i_k} ) \in A(k,n)$. 
We denote by $r_I$ the reflection with respect to $\mathrm{Span} \{ e_{i_1} , \ldots e_{i_k} \}$. 
Then it satisfies 
\begin{align} 
\label{eq:reflection} 
r_{I} (e_j) = \left\{ 
\begin{array}{rl} 
e_j & (\mbox{if $j \in \{ i_1 , \ldots , i_k \}$}) , \\ 
- e_j & (\mbox{otherwise}) . 
\end{array} 
\right. 
\end{align} 
Recall that $s_I$ is induced from $r_I$. 
Then $s_{I}$ is involutive, since so is $r_{I}$. 
We thus have only to show that 
\begin{align} 
s_I (A(k,n)) \subset A(k,n) . 
\end{align} 
Take any $J := \pm ( {j_1} , \ldots , {j_k} ) \in A(k,n)$. 
Then we have 
\begin{align} 
\label{eq:IJ} 
s_{I} (J) = \left\{ 
\begin{array}{rl} 
J & (\mbox{if $\# (\{ j_1 , \ldots , j_k \} \setminus \{ i_1 , \ldots , i_k \})$ is even}) , \\ 
- J & (\mbox{if $\# (\{ j_1 , \ldots , j_k \} \setminus \{ i_1 , \ldots , i_k \})$ is odd}) . 
\end{array} 
\right. 
\end{align} 
This yields that $s_{I} (J) \in A(k,n)$, which completes the proof. 
\end{proof} 

One can see from (\ref{eq:IJ}) that $A(1,n) \cong A^{n-1}$. 
As for the case of $A^n$, 
the quandles $A(k,n)$ have the following nice properties. 

\begin{Prop} 
The quandle $A(k,n)$ is homogeneous, disconnected, and having an abelian inner automorphism group. 
\end{Prop}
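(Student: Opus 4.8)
The plan is to prove the three assertions separately, each by exploiting the explicit combinatorial description of $A(k,n)$ established above, especially the formulas (\ref{eq:reflection}) and (\ref{eq:IJ}).

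First I would dispose of the \emph{abelian inner automorphism group}, since this is the cleanest. By Proposition~\ref{prop:oriented}, each $s_I$ is induced by the reflection $r_I$, and by (\ref{eq:reflection}) the reflection $r_I$ is diagonal with respect to the standard basis $\{ e_1 , \ldots , e_n \}$, with diagonal entries $+1$ on the slots indexed by $\{ i_1 , \ldots , i_k \}$ and $-1$ elsewhere. Diagonal matrices commute, so $r_I \circ r_J = r_J \circ r_I$ for all $I, J \in A(k,n)$, and hence the induced maps $s_I$ commute as well. This shows that the generators of $\Inn(A(k,n),s)$ pairwise commute, so $\Inn(A(k,n),s)$ is abelian. (This is exactly the argument used for $A^{n-1}$ in Example~\ref{ex:flat}(2), now lifted to the oriented Grassmannian.)

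Next I would treat \emph{disconnectedness}. The point is that a single application of $s_I$ can only change orientations, never the underlying subspace: by (\ref{eq:IJ}), $s_I(J) \in \{ J , -J \}$ for every $J$, so the underlying unoriented subspace $\mathrm{Span}\{ e_{j_1}, \ldots, e_{j_k} \}$ is fixed. Therefore the whole orbit $\Inn(A(k,n),s).J$ is contained in the two-element set $\{ +J, -J \}$, which is a proper subset of $A(k,n)$ as soon as $n \geq 2$ (there is more than one coordinate $k$-subspace). Hence no connected component equals all of $A(k,n)$, so $A(k,n)$ is disconnected.

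Finally, for \emph{homogeneity} I would exhibit enough automorphisms of $A(k,n)$ acting transitively, mimicking Example~\ref{ex:homogeneous}. By Proposition~\ref{prop:oriented}, $\OO(n)$ acts on $G_k(\R^n)^\sim$ as automorphisms, so by Lemma~\ref{lem:subquandle} it suffices to produce, for any two elements of $A(k,n)$, an element $g \in \OO(n)$ with $g(A(k,n)) = A(k,n)$ carrying one to the other. To move between different coordinate subspaces I would use signed permutation matrices (permuting and sign-changing the $e_i$), which clearly preserve $A(k,n)$ and act transitively on the underlying unoriented coordinate $k$-subspaces; to flip the orientation of a fixed subspace while preserving $A(k,n)$, I would use a reflection in a single coordinate lying inside the chosen subspace, which reverses the ordered basis orientation but returns a (possibly reordered) standard basis and hence stays in $A(k,n)$. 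The main obstacle here is purely bookkeeping: I must check that the chosen $g$'s genuinely normalize $A(k,n)$ (i.e.\ send standard oriented coordinate subspaces to standard ones) and that combining a transitive action on unoriented subspaces with an orientation-flip gives transitivity on the full set of $\pm$ oriented subspaces. Once that is verified, transitivity of $N(A(k,n)) \subset \Aut(G_k(\R^n)^\sim)$ on $A(k,n)$ gives homogeneity.
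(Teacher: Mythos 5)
Your overall strategy coincides with the paper's: the commutativity argument (the reflections $r_I$ are simultaneously diagonal by (\ref{eq:reflection}), hence commute, hence the induced $s_I$ commute) and the homogeneity argument (apply Lemma~\ref{lem:subquandle} to the $\OO(n)$-action of Proposition~\ref{prop:oriented}, move between coordinate subspaces by signed permutations, i.e.\ rotations by $\pi/2$ in coordinate planes, and correct the orientation by one extra sign change) are essentially what the paper does.

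The one step that does not quite go through as written is disconnectedness. From (\ref{eq:IJ}) you correctly get $\Inn(A(k,n)).J \subset \{+J,-J\}$, but your claim that this is a \emph{proper} subset ``as soon as $n \ge 2$'' fails when $k=n$: then there is a unique coordinate $k$-subspace, $\# A(n,n)=2$, and $\{+J,-J\}$ is all of $A(n,n)$. (In that case $r_I=\id_{\R^n}$, so the quandle is trivial and is disconnected for a different reason.) The paper avoids this case split by arguing by contradiction: if $A(k,n)$ were connected, the orbit computation would force $\# A(k,n)=2$, but any two-element quandle is trivial and hence disconnected, a contradiction. Either patch --- treating $k=n$ (and $n=1$) separately, or adopting the paper's contradiction argument --- closes the gap; everything else in your proposal is sound.
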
 

\begin{proof} 
First of all, we show the homogeneity. 
Note that $\OO(n)$ acts on $G_k(\R^n)^\sim$ as automorphisms by Proposition~\ref{prop:oriented}. 
Hence, by Lemma~\ref{lem:subquandle}, it is enough to prove that 
the normalizer $N(A(k,n))$ in $\OO(n)$ acts transitively on $A(k,n)$. 
Take any $I = \pm (i_1 , \ldots , i_k) \in A(k,n)$, 
and show that there exists $g \in N(A(k,n))$ such that $g(I) = (1, \ldots , k)$. 
First of all, 
similarly to the proof of Example~\ref{ex:homogeneous}, 
the normalizer $N(A(k,n))$ contains a rotation of angle $\pi/2$ in the $e_i e_j$-plane, 
for any $i,j$. 
Then there exists $h \in N(A(k,n))$ such that 
\begin{align} 
h (\Span{ e_{i_1} , \ldots , e_{i_k}} ) = \Span{ e_1 , \ldots , e_k } . 
\end{align} 
This yields that $h(I) = \pm (1 , \ldots , k)$. 
Here, let us put 
\begin{align} 
g := ( \pm 1 , 1 , \ldots , 1) h \in N(A(k,n)) , 
\end{align} 
where the double-signs correspond. 
Then we have $g(I) = (1, \ldots , k)$, 
which completes the proof of the homogeneity. 

We next show that it is disconnected. 
Assume that $A(k,n)$ is connected. 
Then, for every $J \in A(k,n)$, it follows from (\ref{eq:IJ}) that 
\begin{align} 
\{ \pm J \} \supset \mathrm{Inn}(A(k,n)) . J = A(k,n) \supset \{ \pm J \} . 
\end{align} 
We thus have $\# A(k,n) = 2$. 
However, such quandle must be trivial, which is a contradiction. 
This proves that $A(k,n)$ is disconnected. 

We finally study the inner automorphism group. 
It follows from (\ref{eq:reflection}) that, 
for every $I \in A(k,n)$, 
the reflection $r_{I}$ can be realized as a diagonal matrices. 
Therefore, 
by the definition of $s_I$, 
the inner automorphism group $\mathrm{Inn}(A(k,n))$ is abelian. 
This completes the proof. 
\end{proof} 

\subsection{A key observation} 
\label{subsection:observation} 

In this subsection, 
we observe that one can construct a graph from the quandle $A(k,n)$. 
Recall that a graph $G = (V(G), E(G))$ 
consists of the set of vertices $V(G)$ and the set of edges $E(G)$. 

We here describe an idea of our construction by using 
\begin{align} 
A(2,4) = \{ \pm (1,2) , \pm (1,3) , \pm (1,4) , \pm (2,3) , \pm (2,4) , \pm (3,4) \} . 
\end{align} 
One knows that $A(2,4)$ is decomposed into $6$ connected components, 
which are of the form $\{ \pm (i,j) \}$. 
Recall that a connected component is an orbit of the inner automorphism group. 
We denote by $[i,j] := \{ \pm (i,j) \}$. 

\begin{Ex} 
For $A(2,4)$, let us construct a graph as follows$:$ 
\begin{itemize} 
\item 
put $V(G) := \{ [i,j] \mid 1 \leq i < j \leq 4 \}$, 
that is, the connected components correspond to vertices$;$ 
\item 
join $[i , j]$ and $[k , \ell]$ by an edge if $s_{\pm (i,j)}$ acts nontrivially on $\{ \pm (k,\ell) \}$. 
\end{itemize} 
Then we obtain the graph in Fig.~\ref{fig:1}. 
\end{Ex} 

\begin{figure}[h] 
\centering 
\includegraphics[width=5cm]{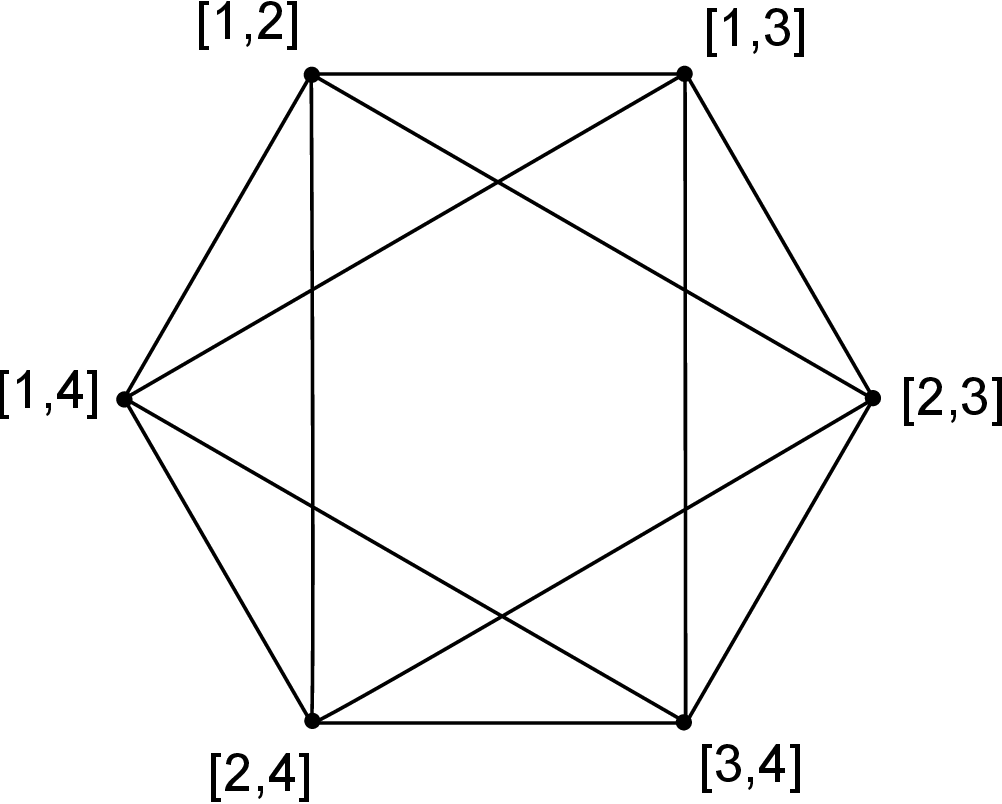} 
\caption{The graph associated from $A(2,4)$} 
\label{fig:1} 
\end{figure} 

\begin{proof} 
One can directly show the assertion by (\ref{eq:IJ}). 
Note that $s_{\pm (i,j)}$ acts nontrivially on $\{ \pm (k,\ell) \}$ 
if and only if 
$s_{\pm (k,\ell)}$ acts nontrivially on $\{ \pm (i,j) \}$. 
\end{proof} 

Conversely, 
one can expect that the quandle structure of $A(2,4)$ is reconstructed from the graph in Fig.~\ref{fig:1}. 
In fact, this is true. 
More generally, starting from any graphs, one can construct quandles. 
We will formulate this construction in the next sections. 

\section{The quandles associated to graphs} 
\label{sec4} 

By a simple graph, 
we mean an undirected graph containing no loops or multiple edges. 
In this section, 
we show the first part of Theorem~\ref{thm:1-3}. 
Namely, we construct quandles $Q_G$ associated to simple graphs $G$, 
and prove that they are always disconnected and having abelian inner automorphism groups. 

\subsection{Definition of $Q_G$} 

In this subsection, 
we define the quandle $Q_G$ for a simple graph $G$, 
by referring to the examples given in Subsection~\ref{subsection:observation}. 
We denote a graph by $G = (V(G), E(G))$, 
where $V(G)$ is the set of vertices and $E(G)$ is the set of edges. 
For each $v,w \in V(G)$, 
denote by $v \sim w$ if they are joined by an edge. 
We use the adjacent function $e : V(G) \times V(G) \to \Z_2$ defined in (\ref{eq:adjacent}). 

\begin{Def} 
Let $G = (V(G), E(G))$ be a simple graph, and put $X := V(G) \times \Z_2$. 
Then we define the \textit{associated map} $s : X \to \Map(X,X)$ by 
\begin{align} 
s_{(v , a)} (w , b) := (w , b + e(v,w) ) . 
\end{align} 
\end{Def} 

Intuitively, $Q_G$ is constructed by attaching $\Z_2$ on each vertex of a graph $G$. 
The map $s$ is defined by using information of the edges. 

\begin{Prop} 
Let $G = (V(G), E(G))$ be a simple graph. 
Then the associated map $s$ is a quandle structure on $X := V(G) \times \Z_2$. 
\end{Prop}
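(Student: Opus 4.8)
The plan is to verify directly that the associated map $s$ satisfies the three quandle axioms (Q1), (Q2), (Q3). The verification of (Q1) and (Q2) should be routine, while (Q3) requires a short computation that crucially uses the symmetry of the adjacent function $e$; establishing (Q3) cleanly is the only place where any care is needed.

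First I would check (Q1). For any $(v,a) \in X$, the definition gives $s_{(v,a)}(v,a) = (v, a + e(v,v))$. Since the graph $G$ is simple and hence contains no loops, we have $e(v,v) = 0$, so $s_{(v,a)}(v,a) = (v,a)$, which establishes (Q1). Next I would check (Q2). For fixed $(v,a)$, the map $s_{(v,a)}$ acts on the second coordinate by adding the fixed element $e(v,w) \in \Z_2$, which depends only on $w$; since addition by a fixed element of $\Z_2$ is a bijection of $\Z_2$, and the first coordinate $w$ is unchanged, $s_{(v,a)}$ is a bijection of $X$. In fact one sees $s_{(v,a)}$ is involutive, so its own inverse, which also anticipates the involutivity remarked upon in the introduction.

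The main step is (Q3), which asserts $s_{(v,a)} \circ s_{(v',a')} = s_{s_{(v,a)}(v',a')} \circ s_{(v,a)}$ for all arguments. I would evaluate both sides on an arbitrary $(w,b) \in X$. On the left-hand side, applying $s_{(v',a')}$ first yields $(w, b + e(v',w))$, and then applying $s_{(v,a)}$ yields $(w, b + e(v',w) + e(v,w))$. On the right-hand side, I first compute $s_{(v,a)}(v',a') = (v', a' + e(v,v'))$, so the outer symmetry is $s_{(v', a' + e(v,v'))}$; applying $s_{(v,a)}$ to $(w,b)$ gives $(w, b + e(v,w))$, and then applying $s_{(v', a'+e(v,v'))}$ gives $(w, b + e(v,w) + e(v',w))$. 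The two results agree because addition in $\Z_2$ is commutative, which is exactly where the argument rests.

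The point I would flag as the crux is that (Q3) goes through \emph{regardless} of the values $e(v,v')$ and $a'$: the first coordinate of the outer symmetry on the right is $v'$ in both the computation of $s_{s_{(v,a)}(v',a')}$ and in matching it to the left side, and the shift $e(v,v')$ inserted into the second coordinate of the generating point is simply irrelevant to how that symmetry acts, since $s_{(v',\,\cdot\,)}$ depends only on $v'$. Thus the only property of $e$ actually used is its symmetry $e(v,w) = e(w,v)$—guaranteed because $G$ is undirected—together with commutativity of $\Z_2$. I do not expect any genuine obstacle here; the verification is a direct unwinding of definitions, and the subtlety, if any, is merely bookkeeping to confirm that the second-coordinate shifts line up, which they do by the commutativity just noted.
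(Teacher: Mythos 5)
Your verification is correct and follows essentially the same route as the paper: $e(v,v)=0$ for (Q1), involutivity for (Q2), and for (Q3) the observation that $s_{(v',c)}$ is independent of $c$ (the paper's identity $s_{(v,0)}=s_{(v,1)}$) together with commutativity of addition in $\Z_2$. One small correction to your closing remark: the symmetry $e(v,w)=e(w,v)$ is in fact \emph{not} used anywhere in checking (Q3) --- your own computation shows both sides reduce to $(w,\,b+e(v,w)+e(v',w))$ using only commutativity of $+$ in $\Z_2$ and the independence of the outer symmetry from its second coordinate, so the construction would yield a quandle even for a directed graph with $e(v,v)=0$; undirectedness is what the paper later uses to show $Q_G$ is crossed.
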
 

\begin{proof} 
Since the graph $G$ is simple, 
one has $v \not \sim v$ and hence $e(v,v) = 0$. 
This proves (Q1). 
Also, by the definition of $s$, we have 
\begin{align} 
s_{(v,a)}^2 (w,b) = s_{(v,a)} (w , b + e(v,w)) = (w , b + 2 e(v,w)) = (w , b) . 
\end{align} 
This means $s_{(v , a)}^2 = \id$, and hence proves (Q2). 
It remains to show (Q3). 
By the definition of $s$, we have 
\begin{align} 
\label{eq:0822} 
s_{(v,a)} \circ s_{(w, b)} (u, c) = s_{(v,a)} (u, c + e(w,u)) = (u, c + e(w,u) + e(v,u)) . 
\end{align} 
On the other hand, since $s_{(v,0)} = s_{(v,1)}$ by definition, it satisfies 
\begin{align} 
\label{eq:3-3} 
s_{(s_{(v,a)} (w,b))} = s_{(w, b + e(v,w) )} = s_{(w, b)} . 
\end{align} 
This yields that 
\begin{align} 
s_{(s_{(v,a)} (w,b))} \circ s_{(v,a)} (u,c) = s_{(w,b)} (u, c + e(v,u)) = (u, c + e(v,u) + e(w,u)) , 
\end{align} 
which completes the proof. 
\end{proof} 

Therefore, the pair $(Q_G, s)$ is indeed a quandle for every graph $G$. 
This $(Q_G, s)$ is called the \textit{quandle associated to $G$} in this paper. 

\subsection{Examples of $Q_G$} 

As examples, we here describe $Q_G$ when $G$ are easy graphs. 
The first case is an empty graph. 
Recall that a graph is said to be \textit{empty} or \textit{edgeless} 
if there are no edges, that is, $E(G) = \emptyset$. 

\begin{Ex} 
\label{ex:trivial} 
A graph $G$ is empty if and only if $Q_G$ is a trivial quandle. 
\end{Ex} 

\begin{proof} 
It follows easily from the definition of the associated map $s$. 
\end{proof} 

The next easy example is given by a complete graph. 
Recall that a graph is said to be \textit{complete} if any two vertices are joined by an edge. 

\begin{Ex} 
If $G$ is a complete graph with $n$ vertices, 
then $Q_G$ is isomorphic to the quandle $A^{n-1} = \{ \pm e_1 , \ldots , \pm e_n \}$ described in 
Example~\ref{ex:our_quandle}. 
\end{Ex} 

\begin{proof} 
We denote by $V(G) = \{ v_1 , \ldots , v_n \}$. 
Then one can directly see that 
\begin{align} 
f : V(G) \times \Z_2 \to A^{n-1} : (v_i , a) \mapsto (-1)^a e_i 
\end{align} 
gives a quandle isomorphism between $Q_G$ and $A^{n-1}$. 
\end{proof} 

In Subsection~\ref{subsection:some_subquandles}, 
the subquandle $A(k,n)$ of the oriented real Grassmannian $G_k(\R^n)^\sim$ is defined. 
We here show that $A(k,n)$ is constructed from a graph. 

\begin{Ex} 
Let $N := \{ 1 , \ldots , n \}$ and fix $k \in N$. 
Let us define a graph $G$ by 
\begin{align*} 
\begin{split} 
V(G) & := \{ v \subset N \mid \# v = k \} , \\ 
v \sim w & : \Leftrightarrow \mbox{$\# (v \setminus w)$ is odd} . 
\end{split} 
\end{align*} 
Then the associated quandle $Q_G$ is isomorphic to $A(k,n)$. 
\end{Ex} 

\begin{proof} 
We construct a map $f : V(G) \times \Z_2 \to A(k,n)$. 
Let $v \in V(G)$, 
and we use the notation defined in Subsection~\ref{subsection:some_subquandles}. 
Then one has $v = \{ i_1 , \ldots , i_k \}$ with $i_1 < \cdots < i_k$. 
In terms of this expression, we define 
\begin{align} 
f ( \{ i_1 , \ldots , i_k \} , a ) := 
\left\{ 
\begin{array}{rl} 
(i_1 , \ldots , i_k) & (\mbox{if $a = 0$}) , \\ 
- (i_1 , \ldots , i_k) & (\mbox{if $a = 1$}) . 
\end{array} 
\right. 
\end{align} 
It is obvious that $f$ is bijective, 
and it follows from (\ref{eq:IJ}) that $f$ is a quandle homomorphism. 
\end{proof} 

The above graphs with $k=1$ are complete graphs, 
and those with $k=2$ are known as the \textit{Johnson graphs} $J(n,2)$. 
Recall that the Johnson graph $J(n,k)$ is defined by the same set of vertices $V(G)$, 
and $v, w \in V(G)$ are joined by an edge if $\# (v \cap w) = k-1$. 

\subsection{Properties of $Q_G$} 

In this subsection, 
we study some basic properties of the quandles $Q_G$ associated to graphs $G$. 
In particular, they are all disconnected and having abelian inner automorphism groups. 
We begin with fundamental properties. 

\begin{Prop} 
\label{prop:QG1} 
For every graph $G$, the quandle $Q_G$ satisfies the following$:$
\begin{enumerate} 
\item[$(1)$] 
all connected components consist of at most two points$;$ 
\item[$(2)$] 
it is crossed, that is, for all $x,y \in X$, ``$s_x(y) = y$ implies $s_y(x) = x$''. 
\end{enumerate} 
\end{Prop}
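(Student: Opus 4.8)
The plan is to read off both assertions directly from the explicit formula for the point symmetries, namely $s_{(v,a)}(w,b) = (w, b + e(v,w))$, which shows that a point symmetry only ever alters the $\Z_2$-coordinate.

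For part (1), the key observation I would isolate first is that every point symmetry fixes the first coordinate: for any generator $s_{(v,a)}$ and any $(w,b) \in X$, the image $(w, b + e(v,w))$ again has first coordinate $w$. A routine induction on word length in the generators then shows that \emph{every} element of $\Inn(X,s)$ preserves the first coordinate. Consequently the orbit $\Inn(X,s).(w,b)$ --- which by definition is the connected component containing $(w,b)$ --- is contained in the fiber $\{w\} \times \Z_2$. Since this fiber has exactly two elements, every connected component has at most two points, which proves (1).

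For part (2), I would simply compute when the relation $s_x(y) = y$ holds. Writing $x = (v,a)$ and $y = (w,b)$, we have $s_{(v,a)}(w,b) = (w, b + e(v,w))$, and this equals $(w,b)$ precisely when $e(v,w) = 0$. Symmetrically, $s_{(w,b)}(v,a) = (v, a + e(w,v))$ equals $(v,a)$ precisely when $e(w,v) = 0$. Because $G$ is an undirected simple graph, the adjacency function is symmetric, $e(v,w) = e(w,v)$, so these two vanishing conditions are equivalent. Hence $s_x(y) = y$ implies $s_y(x) = x$, which is exactly the crossed condition.

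There is no substantial obstacle here: both statements reduce to elementary manipulations with the defining formula once the coordinate-preservation property and the symmetry $e(v,w) = e(w,v)$ have been identified as the two driving facts. The only point deserving a moment's care is recording explicitly that $\Inn(X,s)$, being generated by the maps $s_{(v,a)}$, inherits the first-coordinate-preserving property, so that the orbits in part (1) genuinely remain inside single fibers.
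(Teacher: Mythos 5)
Your proposal is correct and follows essentially the same route as the paper: part (1) comes from the fact that point symmetries (and hence all of $\Inn(X,s)$) preserve the first coordinate, so orbits lie in fibers $\{w\}\times\Z_2$, and part (2) comes from the symmetry $e(v,w)=e(w,v)$ of the adjacency function. You merely spell out the induction on word length and the explicit fixed-point computation that the paper leaves implicit.
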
 

\begin{proof} 
Take any $(v,a) \in X$. 
Recall that a connected component is an orbit of $\mathrm{Inn}(X,s)$. 
Hence, by the definition of $s$, the connected component containing $(v,a)$ satisfies 
\begin{align} 
\mathrm{Inn}(X,s).(v,a) \subset \{ (v,0) , (v,1) \} . 
\end{align} 
This proves (1). 
The assertion (2) also follows easily from the definition of $s$. 
In fact, since edges of $G$ have no orientation, 
one knows that $v \sim w$ if and only if $w \sim v$ for all $v,w \in V(G)$. 
\end{proof} 

We see some more properties of $Q_G$. 
The following properties can be derived from the property mentioned above. 

\begin{Prop} 
Let $(X,s)$ be a quandle, and assume that all connected components consist of at most two points. 
Then we have the following$:$ 
\begin{enumerate} 
\item[$(1)$] 
$(X,s)$ is disconnected if $\# X > 1$$;$ 
\item[$(2)$] 
$\mathrm{Inn}(X,s)$ is abelian. 
\end{enumerate} 
\end{Prop}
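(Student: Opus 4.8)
The guiding idea is that connected components are precisely the orbits of $\Inn(X,s)$, so every inner automorphism must preserve each component setwise, and by hypothesis the components are too small (at most two points) to support any complicated behaviour. For part (1) I would argue by contradiction. Suppose $\# X > 1$ but $(X,s)$ were connected. Then $X$ is a single connected component, whence $\# X \le 2$ by hypothesis, forcing $\# X = 2$. On a two-point set, however, every point symmetry $s_x$ fixes $x$ by (Q1) and is a bijection, so it must be the identity, since the identity is the only bijection of a two-element set fixing a prescribed point. Consequently $\Inn(X,s) = \{ \id \}$, whose orbits are singletons, contradicting connectedness. Hence $(X,s)$ is disconnected whenever $\# X > 1$.

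For part (2), the crucial step is to show that each $g \in \Inn(X,s)$ maps every connected component onto itself. By Proposition~\ref{prop:2-14}, $g$ sends the connected component of $x$ onto the connected component of $g(x)$; but $g \in \Inn(X,s)$ means $g(x) \in \Inn(X,s).x$, which is exactly the connected component of $x$, so the image component coincides with the original one. Restriction to components therefore defines a group homomorphism $\Phi \colon \Inn(X,s) \to \prod_C \mathrm{Sym}(C)$, $g \mapsto (g|_C)_C$, where $C$ ranges over all connected components and $\mathrm{Sym}(C)$ denotes the symmetric group on $C$. Since the components partition $X$, any element acting as the identity on each of them is the identity of $X$, so $\Phi$ is injective. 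As $\# C \le 2$, each factor $\mathrm{Sym}(C)$ is either trivial or isomorphic to $\Z_2$, hence abelian; a product of abelian groups is abelian, and a subgroup of an abelian group is abelian, so $\Inn(X,s)$ is abelian.

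The one step requiring care, rather than a genuine obstacle, is the verification that inner automorphisms stabilize each component setwise, where Proposition~\ref{prop:2-14} is combined with the description of components as $\Inn(X,s)$-orbits. Everything else is routine: that $\Phi$ is a homomorphism follows because restriction commutes with composition, and the final conclusion reduces to the elementary fact that the symmetric group on at most two letters is abelian.
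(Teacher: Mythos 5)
Your proof is correct and follows essentially the same route as the paper: part (1) is the same contradiction via a two-point connected quandle being trivial (you just spell out why a two-point quandle must be trivial), and part (2) rests on the same observation that inner automorphisms restrict to permutations of each component of size at most two, whose symmetric group is abelian. The only difference is presentational — you package the restriction maps as an injective homomorphism into $\prod_C \mathrm{Sym}(C)$, whereas the paper checks directly that the generators $s_x, s_y$ commute at each point $z$.
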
 

\begin{proof} 
Assume that $\# X > 1$ and $(X,s)$ is connected. 
Then, by the assumption on connected components, we have $\# X = 2$. 
Hence $X$ must be a trivial quandle, which is disconnected. 
This is a contradiction, and hence proves (1). 
In order to show (2), take any $x,y,z \in X$. 
We have only to show that 
\begin{align} 
\label{eq:claim_flat} 
s_x \circ s_y (z) = s_y \circ s_x (z) . 
\end{align} 
If $\mathrm{Inn}(X,s).z = \{ z \}$, then the claim easily follows. 
Assume that $\mathrm{Inn}(X,s).z$ consists of two points, namely $\{ z, z^\prime \}$. 
Note that the symmetry group of $\{ z, z^\prime \}$ is isomorphic to $\Z_2$, which is abelian. 
Therefore, 
$s_x |_{\{ z, z^\prime \}}$ and $s_y |_{\{ z, z^\prime \}}$ commute, 
which completes the proof of (\ref{eq:claim_flat}). 
\end{proof} 

Recall that $\# Q_G \geq 2$. 
Therefore, for every graph $G$, 
we have proved that $Q_G$ is a disconnected quandle and having an abelian inner automorphism group. 

\subsection{Note on abelian extensions} 

In this subsection, 
we mention that the quandles $Q_G$ associated to graphs are abelian extensions. 
For an abelian group $A$, we regard it as an additive group $(A,+)$. 

\begin{Def} 
Let $(X,s)$ be a quandle, and $A$ be an abelian group. 
Then a map $\phi : X \times X \to A$ is called a \textit{quandle $2$-cocycle} if it satisfies 
\begin{enumerate} 
\item[$(1)$] 
$\phi (x,x) = 0$ for any $x \in X$; 
\item[$(2)$] 
$\phi(x,y) - \phi(x,z) + \phi( s_y(x) , z) - \phi ( s_z(x), s_z(y)) = 0$ for any $x,y,z \in X$. 
\end{enumerate} 
\end{Def} 

In the case that $(X,s)$ is a trivial quandle, 
Condition~(2) holds for every map $\phi$. 
By using a quandle $2$-cocycle, one can construct a new quandle, by extending the original quandle $(X,s)$. 

\begin{Prop}[\cite{CENS, CKS}] 
Let $(X,s)$ be a quandle, $A$ be an abelian group, and $\phi : X \times X \to A$ be a quandle $2$-cocycle. 
Then the map $s$ defined by the following gives a quandle structure on $X \times A$$:$ 
\begin{align*} 
s_{(x , a)} (y , b) := ( s_x(y) , b + \phi(v,w) ) \qquad ( x,y \in X , \ a,b \in A ) . 
\end{align*} 
\end{Prop}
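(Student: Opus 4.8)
The plan is to verify directly that the extended map $\tilde{s}$ on $X \times A$ satisfies the three quandle axioms (Q1), (Q2), and (Q3). Axioms (Q1) and (Q2) follow from a short computation together with the first cocycle condition, while the entire substance of the statement is carried by (Q3), where the second cocycle condition is exactly what is needed.

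First I would check (Q1): for any $(x,a) \in X \times A$ we have $\tilde{s}_{(x,a)}(x,a) = (s_x(x),\, a + \phi(x,x))$, which equals $(x,a)$ because $s_x(x) = x$ by (Q1) for $(X,s)$ and $\phi(x,x) = 0$ by the first cocycle condition. For (Q2) I would argue structurally rather than invert by hand: the map $\tilde{s}_{(x,a)}$ covers the bijection $s_x$ of the base $X$, and on each fibre $\{y\} \times A$ it acts by a translation of $A$, sending it bijectively onto the fibre $\{s_x(y)\} \times A$. A fibrewise map that covers a bijection of the base and restricts to a bijection on every fibre is itself a bijection, so $\tilde{s}_{(x,a)}$ is bijective; its inverse is again of the same shape, with $s_x$ replaced by $s_x^{-1}$ and the translation reversed.

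The heart of the argument is (Q3), namely $\tilde{s}_{(x,a)} \circ \tilde{s}_{(y,b)} = \tilde{s}_{\tilde{s}_{(x,a)}(y,b)} \circ \tilde{s}_{(x,a)}$ for all $(x,a),(y,b),(z,c)$. I would evaluate both sides at $(z,c)$ and compare coordinates. The first coordinates of the two outputs are $s_x(s_y(z))$ and $s_{s_x(y)}(s_x(z))$, which coincide precisely by (Q3) for the base quandle $(X,s)$; this disposes of the $X$-component without reference to $\phi$. Cancelling the common summand $c$ from the second coordinates leaves an identity in $A$ of the form
\[
\phi(z,y) + \phi(s_y(z),x) = \phi(z,x) + \phi(s_x(z), s_x(y)),
\]
which, after renaming the variables to match the stated convention, is literally the second cocycle condition. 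Thus (Q3) holds if and only if $\phi$ is a quandle $2$-cocycle, and no computation beyond this identification is required.

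The only real obstacle is bookkeeping: one must feed the arguments into $\phi$ in the order dictated by the second cocycle condition (in which the point symmetry acts on the first slot of $\phi$), and correspondingly read the second component of the extension as $b + \phi(y,x)$. Once the variables in the displayed identity are aligned with those in the cocycle condition, the two coincide term by term, and the proof is complete.
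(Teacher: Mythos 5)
Your verification is correct, and in fact the paper offers no proof of this proposition at all: it is quoted from \cite{CENS, CKS} and used as a black box, so there is nothing in the text to compare against beyond the statement itself. Your direct check of (Q1)--(Q3) is the standard argument, and the one point requiring genuine care --- that the displayed formula $b+\phi(v,w)$ is a typo for $b+\phi(y,x)$, with the point symmetry acting on the \emph{first} slot of $\phi$ so that the second coordinates of (Q3) reduce, after substituting $(x,y,z)\mapsto(z,y,x)$, exactly to the paper's cocycle condition $\phi(x,y)-\phi(x,z)+\phi(s_y(x),z)-\phi(s_z(x),s_z(y))=0$ --- you handle explicitly and correctly. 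The fibrewise bijectivity argument for (Q2) and the use of $\phi(x,x)=0$ for (Q1) are both fine; the only microscopic imprecision is the closing claim that (Q3) holds ``if and only if'' $\phi$ is a $2$-cocycle, since (Q3) is equivalent only to the second cocycle condition, the first being what (Q1) needs, but this does not affect the validity of the proof.
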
 

For a simple graph $G = (V(G) , E(G))$, 
we regard the set of vertices $V(G)$ as a trivial quandle. 
As an abelian group, we take $\Z_2$. 
Then the adjacent function $e : V(G) \times V(G) \to \Z_2$ is a quandle $2$-cocycle. 
Note that $e(v,v) = 0$ holds, since the simple graph $G$ contains no loops. 
Hence one can obtain a quandle by this $2$-cocycle, 
which is nothing but the quandle $Q_G$ defined in the previous subsection. 

\section{The quandles associated to vertex-transitive graphs} 
\label{sec5}

Recall that a graph $G = (V(G), E(G))$ is said to be \textit{vertex-transitive} 
if the graph automorphism group $\Aut(G)$ acts transitively on $V(G)$. 
In this section, we show the latter part of Theorem~\ref{thm:1-3}, 
that is, 
the quandle $Q_G$ is homogeneous if and only if $G$ is vertex-transitive. 
A characterization of such quandles is also given. 

\subsection{Some isomorphisms of $Q_G$} 

In this subsection, 
we give some quandle isomorphisms and automorphisms on $Q_G$. 
This gives some relations between graph automorphisms and quandle isomorphisms. 

\begin{Lem} 
\label{lem:isom1}
Let $G_1$ and $G_2$ be graphs, and $\phi : V(G_1) \to V(G_2)$ be a map. 
We consider 
\begin{align} 
\varphi : V(G_1) \times \Z_2 \to V(G_2) \times \Z_2 : (v,a) \mapsto (\phi(v) , a) . 
\end{align} 
Then $\phi$ is a graph isomorphism between $G_1$ and $G_2$ if and only if 
$\varphi$ is a quandle isomorphism between $Q_{G_1}$ and $Q_{G_2}$. 
\end{Lem}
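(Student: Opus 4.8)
The plan is to prove the biconditional by carefully unwinding both definitions and checking that the edge-adjacency condition is exactly what the quandle-structure compatibility forces. Recall that the associated map is $s_{(v,a)}(w,b) = (w, b + e(v,w))$, so all the quandle data is encoded in the adjacent function $e$. Since $\varphi(v,a) = (\phi(v),a)$ preserves the second coordinate and acts as $\phi$ on the first, the homomorphism condition from Definition~\ref{Def2.3} should reduce directly to a statement comparing $e$-values in $G_1$ and $G_2$.

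First I would set up the two computations that must be compared. Writing $s^1$ and $s^2$ for the quandle structures on $Q_{G_1}$ and $Q_{G_2}$, and $e_1, e_2$ for their adjacent functions, I would compute
\begin{align*}
\varphi \circ s^1_{(v,a)} (w,b) &= \varphi(w, b + e_1(v,w)) = (\phi(w), b + e_1(v,w)) , \\
s^2_{\varphi(v,a)} \circ \varphi (w,b) &= s^2_{(\phi(v),a)} (\phi(w), b) = (\phi(w), b + e_2(\phi(v),\phi(w))) .
\end{align*}
Comparing the second coordinates, $\varphi$ is a quandle homomorphism if and only if $e_1(v,w) = e_2(\phi(v),\phi(w))$ for all $v,w \in V(G_1)$. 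This is precisely the condition that $\phi$ preserve (and, when $\phi$ is bijective, reflect) the adjacency relation, i.e.\ that $\phi$ be a graph homomorphism respecting nonedges as well as edges.

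Next I would handle bijectivity, since both ``isomorphism'' notions require it. Because $\varphi$ fixes the $\Z_2$-coordinate and acts by $\phi$ on the first, $\varphi$ is bijective if and only if $\phi$ is bijective; this is immediate. For the forward direction, if $\phi$ is a graph isomorphism then it is a bijection with $v \sim w \Leftrightarrow \phi(v) \sim \phi(w)$, which gives $e_1(v,w) = e_2(\phi(v),\phi(w))$, so by the computation above $\varphi$ is a bijective homomorphism, hence a quandle isomorphism. For the converse, if $\varphi$ is a quandle isomorphism then $\phi$ is bijective and $e_1(v,w) = e_2(\phi(v),\phi(w))$ for all $v,w$, which says exactly that $\phi$ and its inverse both preserve adjacency, so $\phi$ is a graph isomorphism.

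The only subtle point, and the step I expect to require the most care, is that the equivalence $e_1(v,w) = e_2(\phi(v),\phi(w))$ must capture \emph{both} the edge-preserving and the nonedge-preserving conditions simultaneously. Since $e$ takes only the values $0$ and $1$, equality of the $\Z_2$-values forces $v \sim w$ exactly when $\phi(v) \sim \phi(w)$, so there is no loss; but I would state this explicitly rather than gloss it, because for a mere graph homomorphism (as opposed to an isomorphism) preservation of nonedges can fail. Here bijectivity of $\phi$ together with the two-valued nature of $e$ makes the adjacency condition symmetric, so the argument closes cleanly in both directions.
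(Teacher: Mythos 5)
Your proposal is correct and follows essentially the same route as the paper: compute both sides of the homomorphism identity, observe that equality of the second coordinates is exactly $e_1(v,w) = e_2(\phi(v),\phi(w))$, and handle bijectivity separately. Your explicit remark that the two-valued adjacency function makes this condition capture nonedge-preservation as well is a worthwhile clarification the paper leaves implicit.
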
 

\begin{proof} 
It is easy to see that $\phi$ is bijective if and only if $\varphi$ is bijective. 
Hence we have only to show that 
$\phi$ is a graph homomorphism if and only if $\varphi$ is a quandle homomorphism. 
One knows that $\phi$ is a graph homomorphism if and only if 
\begin{align} 
\label{eq:graph_homomorphism}
e(v,w) = e (\phi(v) , \phi(w)) \quad (\forall v , w \in V(G_1)) . 
\end{align} 
On the other hand, by definition, $\varphi$ is a quandle homomorphism if and only if 
\begin{align} 
\varphi \circ s_{(v,a)} (w,b) = s_{\varphi (v,a)} \circ \varphi (w,b) 
\end{align} 
holds for any $(v,a), (w,b) \in V(G_1) \times \Z_2$. 
For the both sides of this equation, we have that 
\begin{align} 
\begin{split} 
(\mathrm{LHS}) & = \varphi (w , b + e(v,w)) = (\phi(w) , b + e(v,w)) , \\ 
(\mathrm{RHS}) & = s_{(\phi(v) , a)} (\phi(w) , b) = (\phi(w) , b + e (\phi(v) , \phi(w)) ) . 
\end{split} 
\end{align} 
This proves that $\varphi$ is a quandle isomorphism if and only if (\ref{eq:graph_homomorphism}) holds, 
which completes the proof. 
\end{proof} 

We need another isomorphism of $Q_G$, 
which fixes the graph but interchanges any two points on the same vertex. 

\begin{Lem} 
\label{lem:isom2} 
Let $Q_G := (V(G) \times \Z_2, s)$ be the quandle associated to a graph $G$. 
Then for every $u \in V(G)$, the following $r_u$ is an automorphism of $Q_G$$:$ 
\begin{align} 
r_u : V(G) \times \Z_2 \to V(G) \times \Z_2 : (v,a) \mapsto \left\{ 
\begin{array}{ll} 
(v , a+1) & (v = u) , \\ 
(v , a) & (v \neq u) . 
\end{array} 
\right. 
\end{align} 
\end{Lem} 

\begin{proof} 
It is clear that $r_u$ is bijective. 
In order to show that $r_u$ is a homomorphism, 
take any $(v,a) , (w,b) \in V(G) \times \Z_2$. 
We claim that 
\begin{align} 
r_u \circ s_{(v,a)} (w,b) = s_{r_u (v,a)} \circ r_u (w,b) . 
\end{align} 
For simplicity of the notation, we put 
\begin{align} 
\delta(x , y) := \left\{ 
\begin{array}{ll} 
1 & (x = y) , \\ 
0 & (x \neq y) 
\end{array} \right. 
\end{align} 
for $x,y \in V(G)$. 
Then, for the both sides of the claim, one can see that 
\begin{align} 
\begin{split} 
(\mathrm{LHS}) 
& = r_u (w , b + e(v,w)) 
= (w , b + e(v,w) + \delta(u,w)) , \\ 
(\mathrm{RHS}) 
& = s_{(v, a + \delta(u,v))} (w , b + \delta(u,w)) 
= (w , b + \delta(u,w) + e(v,w)) . 
\end{split} 
\end{align} 
This completes the proof of the claim. 
\end{proof} 

\subsection{The homogeneity} 

In this subsection, 
we prove the main result of this section, 
namely, 
$Q_G$ is homogeneous if and only if $G$ is vertex-transitive. 
We start with a lemma on the connected components. 

\begin{Lem} 
\label{lem:homogeneity} 
Let $Q_G$ be the quandle associated to a graph $G$, 
and assume that $Q_G$ is nontrivial and homogeneous. 
Then all connected components of $Q_G$ consist of two points, 
which are of the form $\{ (u,0) , (u,1) \}$ with $u \in V(G)$. 
\end{Lem}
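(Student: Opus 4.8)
The plan is to exploit the structural constraint already recorded in Proposition~\ref{prop:QG1}, namely that every connected component of $Q_G$ is contained in a set of the form $\{(v,0),(v,1)\}$ and therefore consists of at most two points. Combined with the homogeneity hypothesis, this will force every component to be a genuine two-point set. The engine of the argument is Proposition~\ref{prop:2-14}: any automorphism $\varphi \in \Aut(Q_G)$ carries a connected component onto a connected component. Since $\varphi$ is a bijection, it preserves the cardinality of connected components.

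First I would use transitivity. By homogeneity, $\Aut(Q_G)$ acts transitively on $X = V(G) \times \Z_2$, so given any two points $x,y \in X$ there is an automorphism $\varphi$ with $\varphi(x)=y$; by Proposition~\ref{prop:2-14} this $\varphi$ maps the component of $x$ bijectively onto the component of $y$. Hence all connected components of $Q_G$ have a common cardinality. Invoking Proposition~\ref{prop:QG1}, each component has either exactly one point or exactly two points, so the common-cardinality conclusion leaves only two possibilities: either every component is a singleton, or every component has exactly two points.

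Next I would rule out the singleton case using nontriviality. If every orbit $\Inn(Q_G).(v,a)$ were a singleton, then $s_{(w,b)}(v,a) = (v,a)$ for all $(w,b),(v,a) \in X$, which by the very definition of $s$ means $e(w,v)=0$ for all $v,w \in V(G)$; that is, $G$ would be an empty graph. By Example~\ref{ex:trivial} this makes $Q_G$ a trivial quandle, contradicting the hypothesis. Therefore every component has exactly two points, and since the component containing $(u,a)$ sits inside $\{(u,0),(u,1)\}$ by Proposition~\ref{prop:QG1}, it must coincide with $\{(u,0),(u,1)\}$, which is the desired form.

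I do not expect a serious obstacle here: the entire argument rests on the single observation that automorphisms preserve component cardinality, and the only subtlety is remembering to eliminate the degenerate all-singletons alternative, for which the nontriviality assumption is exactly what is needed.
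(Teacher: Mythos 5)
Your proposal is correct and follows essentially the same route as the paper: homogeneity plus Proposition~\ref{prop:2-14} forces all components to have equal cardinality, Proposition~\ref{prop:QG1} caps that cardinality at two, and nontriviality excludes the all-singletons case. Your extra step of unwinding the singleton case through the adjacency function $e$ and Example~\ref{ex:trivial} is a harmless elaboration of the paper's one-line observation that an all-singletons quandle is trivial.
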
 

\begin{proof} 
Since $Q_G$ is homogeneous, 
it follows from Proposition~\ref{prop:2-14} that all connected components of $Q_G$ have the same cardinality. 
On the other hand, 
one knows from Proposition~\ref{prop:QG1} that 
each connected component consists of at most two points, namely 
\begin{align} 
\mathrm{Inn}(X,s).(u,a) \subset \{ (u,0) , (u,1) \} . 
\end{align} 
If all connected components of $Q_G$ consist of one point, 
then $Q_G$ must be a trivial quandle, which is not the case. 
Hence all connected components consist of two points. 
\end{proof} 

We are in a position to prove the main result of this section. 
The isomorphisms obtained in the previous subsection play fundamental roles. 

\begin{Thm} 
\label{thm:homogeneos} 
Let $G$ be a graph, and $Q_G$ be the quandle associated to $G$. 
Then $Q_G$ is homogeneous if and only if $G$ is vertex-transitive. 
\end{Thm}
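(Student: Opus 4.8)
The plan is to prove both directions separately, using the two families of isomorphisms established in Lemmas~\ref{lem:isom1} and \ref{lem:isom2}.

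For the easier direction, suppose $G$ is vertex-transitive. I would show $Q_G$ is homogeneous by exhibiting, for any two points $(v,a),(w,b) \in V(G) \times \Z_2$, an automorphism of $Q_G$ carrying one to the other. Since $\Aut(G)$ acts transitively on $V(G)$, choose a graph automorphism $\phi$ with $\phi(v) = w$. By Lemma~\ref{lem:isom1} (applied with $G_1 = G_2 = G$), the induced map $\varphi(v,a) = (\phi(v),a)$ is a quandle automorphism of $Q_G$, and it sends $(v,a)$ to $(w,a)$. If $a = b$ we are done; otherwise, compose $\varphi$ with the automorphism $r_w$ of Lemma~\ref{lem:isom2}, which flips the second coordinate precisely on the fiber over $w$, sending $(w,a)$ to $(w,a+1) = (w,b)$. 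The composition $r_w \circ \varphi$ is an automorphism taking $(v,a)$ to $(w,b)$, so $\Aut(Q_G)$ acts transitively and $Q_G$ is homogeneous.

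For the converse, suppose $Q_G$ is homogeneous. The trivial case ($G$ empty, so $Q_G$ trivial) makes $G$ vertex-transitive only when it is edgeless on a transitive vertex set, so I would handle the nontrivial case and note the trivial case separately; assume $Q_G$ is nontrivial. By Lemma~\ref{lem:homogeneity}, every connected component has exactly two points, of the form $\{(u,0),(u,1)\}$ for $u \in V(G)$. The key is that these components are in natural bijection with $V(G)$, and any quandle automorphism must permute connected components by Proposition~\ref{prop:2-14}. Given $v,w \in V(G)$, use homogeneity to pick a quandle automorphism $\Psi \in \Aut(Q_G)$ with $\Psi(v,0) = (w,0)$. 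Then $\Psi$ sends the component $\{(v,0),(v,1)\}$ onto the component $\{(w,0),(w,1)\}$, so $\Psi$ induces a well-defined bijection $\phi : V(G) \to V(G)$ with $\phi(v) = w$. The main obstacle is verifying that this induced $\phi$ is a graph automorphism, i.e.\ that $e(\phi(x),\phi(y)) = e(x,y)$ for all $x,y$; this must be extracted from the fact that $\Psi$ intertwines the quandle structure. Concretely, $\Psi \circ s_{(x,a)} = s_{\Psi(x,a)} \circ \Psi$, and comparing second coordinates of both sides applied to a point over $y$ should force the adjacency relation to be preserved, since $e(x,y)$ records exactly whether $s_{(x,a)}$ acts nontrivially on the fiber over $y$. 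Once $\phi$ is shown to be a graph automorphism with $\phi(v) = w$, transitivity of $\Aut(G)$ on $V(G)$ follows, so $G$ is vertex-transitive.

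The subtle point I expect to be the crux is the converse's final verification: a general quandle automorphism $\Psi$ need not arise from the simple product form $(\phi,\id)$ of Lemma~\ref{lem:isom1}, because it may also flip some fibers (as in Lemma~\ref{lem:isom2}). So $\Psi$ could be a composite of a graph-induced map with several fiber-flips $r_u$. I would argue that such fiber-flips do not affect the induced map on components at all, hence do not interfere with checking adjacency preservation; the adjacency is an intrinsic property of which symmetries act nontrivially on which components, and this is exactly what Proposition~\ref{prop:QG1}(1)--(2) and the crossed property encode invariantly. Thus the induced $\phi$ depends only on how $\Psi$ permutes components, and the intertwining relation pins down that this permutation respects edges.
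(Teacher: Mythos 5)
Your proposal is correct and follows essentially the same route as the paper: vertex-transitivity gives homogeneity via Lemma~\ref{lem:isom1} composed with the fiber-flips of Lemma~\ref{lem:isom2}, and the converse uses Lemma~\ref{lem:homogeneity} and Proposition~\ref{prop:2-14} to extract an induced map $\phi$ on vertices from a quandle automorphism. The only (harmless) difference is in the last verification: the paper normalizes $\Psi$ to the product form $(\phi,\id)$ by composing with the $r_u$'s and then invokes Lemma~\ref{lem:isom1}, whereas you argue directly via the intertwining relation that adjacency is the component-level invariant ``$s_{(x,a)}$ acts nontrivially on the fiber over $y$,'' which is preserved by any automorphism --- both arguments are valid.
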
 

\begin{proof} 
Assume that $G$ is vertex-transitive, 
and show that $\Aut(Q_G)$ acts transitively on $V(G) \times \Z_2$. 
Take any $(v,a) , (w,b) \in V(G) \times \Z_2$. 
By assumption, there exists $\phi \in \Aut(G)$ such that $\phi(v) = w$. 
It then follows from Lemma~\ref{lem:isom1} that 
there exists $\varphi \in \Aut(Q_G)$ such that 
\begin{align} 
\varphi(v,a) = (\phi(v),a) = (w,a) . 
\end{align} 
Therefore, by composing an automorphism given in Lemma~\ref{lem:isom2} if necessary, 
one can find a quandle automorphism which maps $(v,a)$ into $(w,b)$. 

Conversely, assume that $Q_G$ is homogeneous, 
and prove that $G$ is vertex-transitive. 
When $Q_G$ is a trivial quandle, $G$ is an empty graph, which is vertex-transitive. 
Therefore we have only to consider the case that $Q_G$ is nontrivial. 
Take any $v,w \in V(G)$. 
Since $Q_G$ is homogeneous, 
there exists a quandle isomorphism $\varphi \in \Aut(Q_G)$ such that $\varphi(v,0) = (w,0)$. 
Since $Q_G$ is also nontrivial, 
Lemma~\ref{lem:homogeneity} yields that 
all the connected components of $Q_G$ are of the form $\{ (u,0) , (u,1) \}$. 
Therefore, by applying Proposition~\ref{prop:2-14}, we can show that 
there exists a map $\phi : V(G) \to V(G)$ such that 
\begin{align} 
\varphi(u, \ast) = (\phi(u), \ast) \quad (\forall u \in V(G)) . 
\end{align} 
One knows $\phi(v) = w$, and hence we have only to show that $\phi \in \Aut(G)$. 
Here, 
by composing $\varphi$ with the automorphisms given in Lemma~\ref{lem:isom2} if necessary, 
we can assume without loss of generality that 
\begin{align} 
\varphi(u, a) = (\phi(u), a) \quad (\forall u \in V(G) , \ \forall a \in \Z_2) . 
\end{align} 
Lemma~\ref{lem:isom1} thus yields that $\phi$ is a graph isomorphism. 
\end{proof} 

Since there are a lot of vertex-transitive graphs, 
one can construct a lot of homogeneous disconnected quandles with abelian inner automorphism groups. 
We also note that graphs $G$ are not necessary finite. 
If $G$ is infinite, then we can construct an infinite quandle. 

\subsection{A characterization} 

In this subsection, 
we give a characterization of quandles $Q_G$ associated to vertex-transitive graphs $G$. 
Recall that a quandle $Q$ is said to be crossed if 
for all $x,y \in X$, it satisfies ``$s_x(y) = y$ implies $s_y(x) = x$''. 

\begin{Lem} 
\label{lem:0527} 
Let $Q$ be a crossed quandle, 
and let $\{ v_0 , v_1 \}$ and $\{ w_0 , w_1 \}$  be connected components of $Q$. 
Then the following are mutually equivalent$:$ 
\begin{enumerate} 
\item[$(1)$] 
$s_{v_0} (w_0) = w_0$$;$ \quad $(2)$ $s_{v_0} (w_1) = w_1$$;$ 
\item[$(3)$] 
$s_{v_1} (w_0) = w_0$$;$ \quad $(4)$ $s_{v_1} (w_1) = w_1$$;$ 
\item[$(5)$] 
$s_{w_0} (v_0) = v_0$$;$ \quad $(6)$ $s_{w_0} (v_1) = v_1$$;$ 
\item[$(7)$] 
$s_{w_1} (v_0) = v_0$$;$ \quad $(8)$ $s_{w_1} (v_1) = v_1$. 
\end{enumerate} 
\end{Lem}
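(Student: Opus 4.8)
The plan is to establish the eight conditions through two separate families of equivalences and then link the families into a single cycle. Throughout I use two structural facts. First, by (Q2) and (Q3) each point symmetry $s_x$ is an automorphism of $Q$, so by Proposition~\ref{prop:2-14} it permutes the connected components of $Q$; in particular it preserves the two-point components $\{v_0,v_1\}$ and $\{w_0,w_1\}$ setwise. Consequently the restriction of any such $s_x$ to one of these components is a bijection of a two-element set, hence either the identity or the transposition. Second, $Q$ is crossed, meaning that $s_x(y)=y$ implies $s_y(x)=x$ for all $x,y$.

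First I would record the \emph{within-component} equivalences. Since $s_{v_0}$ restricts to a permutation of the two-element set $\{w_0,w_1\}$, fixing $w_0$ forces fixing $w_1$ and conversely; thus $(1)\Leftrightarrow(2)$. Applying the same observation to $s_{v_1}$ on $\{w_0,w_1\}$, to $s_{w_0}$ on $\{v_0,v_1\}$, and to $s_{w_1}$ on $\{v_0,v_1\}$ yields $(3)\Leftrightarrow(4)$, $(5)\Leftrightarrow(6)$, and $(7)\Leftrightarrow(8)$ respectively.

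Next I would record the \emph{cross} equivalences supplied by crossedness. Taking $x=v_0$, $y=w_0$ gives $(1)\Rightarrow(5)$, while taking $x=w_0$, $y=v_0$ gives $(5)\Rightarrow(1)$, so $(1)\Leftrightarrow(5)$. The same device, applied to the pairs $(v_0,w_1)$, $(v_1,w_0)$, and $(v_1,w_1)$, produces $(2)\Leftrightarrow(7)$, $(3)\Leftrightarrow(6)$, and $(4)\Leftrightarrow(8)$.

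Finally I would assemble these into one closed chain, for instance
\[
(1)\Leftrightarrow(5)\Leftrightarrow(6)\Leftrightarrow(3)\Leftrightarrow(4)\Leftrightarrow(8)\Leftrightarrow(7)\Leftrightarrow(2)\Leftrightarrow(1),
\]
which passes through all eight statements and so establishes their mutual equivalence. I do not anticipate a real obstacle here: the whole content lies in isolating the two mechanisms, namely component-preservation forcing the two-element restriction to be the identity or the transposition, and crossedness interchanging the roles of a point and its symmetry. The only care needed is the bookkeeping to verify that the eight resulting equivalences actually connect all the conditions into a single cycle rather than falling apart into two disjoint groups.
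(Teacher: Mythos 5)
Your proposal is correct and follows essentially the same route as the paper's proof: the four cross equivalences come from crossedness, and the four within-component equivalences come from the fact that each point symmetry preserves a two-point connected component setwise and hence restricts to a bijection of it. Your explicit cycle through all eight conditions is just a more careful statement of the connectivity check the paper leaves implicit.
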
 

\begin{proof} 
Since $Q$ is crossed, one has the following four equivalences: 
\begin{align} 
(1) \Leftrightarrow (5) , \qquad 
(2) \Leftrightarrow (7) , \qquad 
(3) \Leftrightarrow (6) , \qquad 
(4) \Leftrightarrow (8) . 
\end{align} 
Since $\{ w_0 , w_1 \}$ is a connected component, 
it is preserved by $s_{v_i}$. 
Therefore, $s_{v_i}$ gives a bijection from $\{ w_0 , w_1 \}$ onto itself. 
This yields that 
\begin{align} 
(1) \Leftrightarrow (2) , \qquad 
(3) \Leftrightarrow (4) . 
\end{align} 
Similarly, 
since $s_{w_i}$ gives a bijection from $\{ v_0 , v_1 \}$ onto itself, 
one has that 
\begin{align} 
(5) \Leftrightarrow (6) , \qquad 
(7) \Leftrightarrow (8) . 
\end{align} 
This completes the proof. 
\end{proof} 

By applying this lemma, 
we can give a sufficient condition for a quandle $Q$ to be constructed from a graph $G$. 

\begin{Lem} 
\label{lem:characterization2} 
Let $Q$ be a crossed quandle, 
and assume that all connected components of $Q$ consist of two points. 
Then there exists a graph $G$ such that $Q$ is isomorphic to $Q_G$. 
\end{Lem} 

\begin{proof} 
First of all we construct a graph $G$. 
Let $V$ be the set of all connected components of $Q$, 
which will be the set of vertices. 
We write it as 
\begin{align} 
V = \{ v^\lambda \mid \lambda \in \Lambda \} . 
\end{align} 
The set of edges are defined as follows. 
By assumption, each $v^\lambda$ consists of two points, namely 
\begin{align} 
v^\lambda = \{ v^\lambda_0 , v^\lambda_1 \} . 
\end{align} 
We join $v^\lambda$ and $v^\mu$ by an edge if $s_{v^\lambda_0}$ acts nontrivially on $v^\mu$. 
This defines a graph $G = (V,E)$, 
which is well-defined by Lemma~\ref{lem:0527}. 
Let $e : V \times V \to \Z_2$ be the adjacent function of the graph $G$, 
and denote by $s^Q$ the quandle structure of $Q$. 
Then one has that 
\begin{align} 
\label{eq:0601} 
s^Q_{v^\lambda_a} (v^\mu_b) = v^\mu_{b + e (v^\lambda , v^\mu)} . 
\end{align} 
Let $Q_G = (V \times \Z_2 , s)$ be the quandle associated to the graph $G$. 
It remains to show that $Q_G$ is isomorphic to $Q$. 
We define a map 
\begin{align} 
f : V \times \Z_2 \to Q : ( v^\lambda , a ) \mapsto v^\lambda_a . 
\end{align} 
This is clearly bijective, 
and also we can show that $f$ is a homomorphism. 
In fact, by definition of the quandle structure $s$, one has 
\begin{align} 
f \circ s_{(v^\lambda , a)} (v^\mu , b) 
= f (v^\mu , b + e (v^\lambda , v^\mu) ) 
= v^\mu_{b + e (v^\lambda , v^\mu)} . 
\end{align} 
Furthermore, 
it follows from (\ref{eq:0601}) that 
\begin{align} 
s^Q_{f( v^\lambda , a )} \circ f (v^\mu , b) 
= s^Q_{ v^\lambda_a } ( v^\mu_b ) 
= v^\mu_{b + e (v^\lambda , v^\mu)} . 
\end{align} 
This proves that $f$ is a homomorphism, and hence an isomorphism. 
\end{proof} 

We now give a characterization for quandles associated to vertex-transitive graphs. 
Note that we are only interested in the case of nontrivial quandles. 

\begin{Prop} 
\label{prop:characterization} 
Let $Q$ be a quandle. 
Then, it is isomorphic to $Q_G$ for some nonempty vertex-transitive graph $G$ if and only if it satisfies 
\begin{enumerate} 
\item[$(1)$] 
all connected components of $Q$ consist of two points$;$ 
\item[$(2)$] 
$Q$ is crossed$;$ 
\item[$(3)$] 
$Q$ is homogeneous. 
\end{enumerate} 
\end{Prop}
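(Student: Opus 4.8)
The plan is to prove the two implications separately, assembling the tools already developed: Lemma~\ref{lem:characterization2}, Theorem~\ref{thm:homogeneos}, Proposition~\ref{prop:QG1}, and Example~\ref{ex:trivial}. The only genuinely new work is to match the condition ``$G$ is nonempty'' with the two-point-component condition, and to keep track of how crossedness feeds into the graph reconstruction. Throughout I would use that crossedness and the cardinalities of connected components are invariant under quandle isomorphism.

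For the forward implication, I would assume $Q \cong Q_G$ for some nonempty vertex-transitive graph $G$ and verify (1)--(3). Conditions (2) and (3) are essentially free: $Q_G$ is crossed by Proposition~\ref{prop:QG1}(2), which gives (2) after transporting along the isomorphism; and since $G$ is vertex-transitive, $Q_G$ is homogeneous by Theorem~\ref{thm:homogeneos}, which gives (3). For (1), I would show that $G$ has no isolated vertices: fixing an edge $\{u_0,u_1\}$ (which exists since $G$ is nonempty) and using vertex-transitivity to send $u_0$ to an arbitrary vertex $v$ by a graph automorphism $\psi$, the image $\psi(u_1)$ is then adjacent to $v$, so $v$ is not isolated. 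By the description of the connected components of $Q_G$, each of the form $\{(u,0),(u,1)\}$ and consisting of two points precisely when $u$ is non-isolated, all components then consist of two points, which transfers to $Q$.

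For the backward implication, I would start from (1) and (2) and apply Lemma~\ref{lem:characterization2} to obtain a graph $G$ with $Q \cong Q_G$. It then remains to show $G$ is nonempty and vertex-transitive. Vertex-transitivity is immediate: by (3) the quandle $Q$ is homogeneous, hence so is $Q_G$, so $G$ is vertex-transitive by Theorem~\ref{thm:homogeneos}. For nonemptiness, I would observe that in the graph constructed in Lemma~\ref{lem:characterization2} every vertex has a neighbor. Indeed, each component $v^\lambda=\{v^\lambda_0,v^\lambda_1\}$ has two points by (1), so some point symmetry $s_z$ with $z$ in a distinct component moves a point of $v^\lambda$; using crossedness through Lemma~\ref{lem:0527}, this is equivalent to $s_{v^\lambda_0}$ acting nontrivially on the component of $z$, which is exactly the edge condition. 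Hence $G$ has at least one edge and is nonempty, equivalently $Q_G$ is nontrivial by Example~\ref{ex:trivial}.

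The main obstacle, though a modest one, is the bookkeeping around the word ``nonempty'': one must keep in mind that in this paper ``empty'' means edgeless, so that hypothesis (1) is precisely what rules out isolated vertices and forces at least one edge. The slightly delicate step is the nonemptiness argument in the backward direction, where crossedness (via Lemma~\ref{lem:0527}) is genuinely needed to convert ``some symmetry moves the component $v^\lambda$'' into ``$v^\lambda$ has a neighbor in $G$''; everything else reduces to invoking the earlier results together with the invariance of the relevant properties under isomorphism.
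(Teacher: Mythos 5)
Your proposal is correct and follows essentially the same structure as the paper's proof: the forward direction cites Proposition~\ref{prop:QG1} and Theorem~\ref{thm:homogeneos}, and the backward direction combines Lemma~\ref{lem:characterization2} with Theorem~\ref{thm:homogeneos}. The only (harmless) deviations are at the level of sub-steps: for condition (1) in the forward direction the paper invokes Lemma~\ref{lem:homogeneity} (nontrivial plus homogeneous implies two-point components) whereas you argue directly that vertex-transitivity of a nonempty graph rules out isolated vertices, and for nonemptiness in the backward direction the paper simply notes that $Q_G \cong Q$ is nontrivial while you verify explicitly that every vertex of the reconstructed graph has a neighbor.
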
 

\begin{proof} 
Let $G$ be a nonempty vertex-transitive graph. 
Then, $Q_G$ satisfies 
(1) by Lemma~\ref{lem:homogeneity}, 
(2) by Proposition~\ref{prop:QG1}, 
and (3) by Theorem~\ref{thm:homogeneos}. 

It remains to show the ``if''-part. 
Assume that $Q$ satisfies the above three conditions. 
By Conditions~(1) and (2), 
Lemma~\ref{lem:characterization2} yields that 
there exists a graph $G$ such that $Q$ is isomorphic to $Q_G$. 
Furthermore, by Condition~(3), 
Theorem~\ref{thm:homogeneos} yields that $G$ is vertex-transitive. 
Finally, $G$ is nonempty since $Q \cong Q_G$ is nontrivial. 
This completes the proof. 
\end{proof} 

\section*{Acknowledgements} 

The authors would like to thank Mika Nagashiki for a careful reading of the manuscript, 
and Takayuki Okuda for kind advice and suggestions, especially on graph theory. 
They are also grateful to Akira Kubo, Yuichiro Taketomi, Fumiaki Akase, and Kokoro Tanaka for helpful comments.

\end{document}